\numberwithin{equation}{section}
\title[A Zero-One Law for {improvements  to Dirichlet's Theorem}]{{A Zero-One Law for 
{improvements \\ to Dirichlet's Theorem}}}
\author{Dmitry Kleinbock}
\address{Brandeis University, Waltham MA
02454-9110 {\tt kleinboc@brandeis.edu}}
\author{Nick Wadleigh}
\address{Brandeis University, Waltham MA 02454-9110
{\tt wadleigh@brandeis.edu}}
\newif\ifdraft\drafttrue
\newcommand\eq[2]{{\ifdraft{\ \tt [#1]}\else\ignorespaces\fi}\begin{equation}\label{eq:
#1}{#2}\end{equation}}
\newcommand {\equ}[1] {\eqref{eq: #1}}
\newcommand{\Q}{{\mathbb {Q}}}
\newcommand{\R}{{\mathbb{R}}}
\newcommand{\Z}{{\mathbb{Z}}}
\newcommand{\N}{{\mathbb{N}}}
\newcommand{\vv}{{\bf{v}}}
\newcommand{\vb}{{\bf{b}}}
\newcommand{\SL}{\operatorname{SL}}
\newcommand{\diag}{{\operatorname{diag}}}
\newcommand{\x}{{\bf x}}
\newcommand{\vp}{{\bf p}}
\newcommand{\vq}{{\mathbf{q}}}
\newcommand{\nz}{\smallsetminus\{0\}}
\newcommand\hd{Hausdorff dimension}
\newcommand\da{Diophantine approximation}
\newcommand\hs{homogeneous space}
\newcommand\dt{Dirichlet's Theorem}
\newcommand\mr{M_{m,n}
}
\newcommand\amr{$Y\in M_{m,n}
$}
\newcommand {\ignore}[1] {}
\newcommand\ssm{\smallsetminus}
\newtheorem{thm}{Theorem}[section]
\newtheorem{lem}[thm]{Lemma}
\newtheorem{que}[thm]{Question}
\newtheorem{prop}[thm]{Proposition}
\newtheorem{cor}[thm]{Corollary}
\newtheorem{remark}[thm]{Remark}
\begin{document}
\ignore{}
\begin{abstract}
{We give an integrability condition on a function $\psi$ guaranteeing that for almost all (or almost no) {$x\in\R$}, the system $|qx-p|< \psi(t)$, $|q|<t$  is solvable in $p\in \mathbb{Z}$, $q\in \mathbb{Z}\ssm \{0\}$  for sufficiently large $t$. Along the way, we characterize such $x$ in terms of the growth of their continued fraction entries, and we establish that Dirichlet's Approximation Theorem is sharp in a very strong sense. {Higher-dimensional generalizations are discussed at the end of the paper.}}
\end{abstract}
\thanks{The first-named author was supported by NSF grants DMS-1101320 and DMS-1600814.}

\date{January 19, 2017}
\maketitle
\section{Introduction and motivation}\label{intro}
\noindent The starting point for the present paper, as well as for numerous endeavors in the theory of \da, is the following theorem, established by Dirichlet in 1842:
\begin{thm}[Dirichlet's Theorem]\label{dirichlet theorem} For any $x\in \mathbb{R}$ and $t> 1$, there exist $q \in \mathbb{Z}\nz,\ p \in \mathbb{Z}$ such that
\eq{dt}{ |qx - p|
\le \frac 1t
\ \ \ \textit{and}
\ \ |q|
< t.}
\end{thm}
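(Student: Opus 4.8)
The plan is to reproduce Dirichlet's original pigeonhole argument, taking a little care with the endpoint inequalities so as to land exactly on $|qx-p|\le\frac1t$ together with the \emph{strict} bound $|q|<t$.

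First I would set $N\df\lceil t\rceil$; since $t>1$ this gives $N\ge 2$, and since $N\ge t$ it gives $\frac1N\le\frac1t$. I then look at the $N$ points $\{0\},\{x\},\dots,\{(N-1)x\}$ on the circle $\R/\Z$ of circumference $1$, where $\{\,\cdot\,\}$ denotes fractional part. If two of them coincide, say $\{q_1x\}=\{q_2x\}$ with $0\le q_1<q_2\le N-1$, then $q\df q_2-q_1\in\{1,\dots,N-1\}$ and $qx\in\Z$, so with $p\df qx$ we get $|qx-p|=0\le\frac1t$ and are done. Otherwise the $N$ points are distinct and cut the circle into $N$ arcs of total length $1$, so the shortest of them has length $\le\frac1N$; its endpoints are $\{q_1x\}$ and $\{q_2x\}$ for distinct $q_1,q_2\in\{0,\dots,N-1\}$, and the circle-distance $\min_{p\in\Z}|(q_1-q_2)x-p|$ between these endpoints is at most that arc's length, hence $\le\frac1N$. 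Setting $q\df|q_1-q_2|\in\{1,\dots,N-1\}$ and taking $p$ to be a nearest integer to $qx$, I obtain $|qx-p|\le\frac1N\le\frac1t$.

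It then remains only to verify the strict bound $|q|<t$: we have $q\le N-1=\lceil t\rceil-1$, and $\lceil t\rceil-1$ equals $t-1$ when $t\in\Z$ and equals $\lfloor t\rfloor$ when $t\notin\Z$, so in either case $q<t$; together with $p\in\Z$ and $q\in\Z\nz$ this is precisely the assertion. The point of running the pigeonhole with $N-1$ points rather than the more familiar ``$N+1$ points in $N$ boxes'' version is exactly that the latter only yields $|q|\le N$, which is too weak when $t$ is not an integer. One could instead derive the theorem from Minkowski's convex body theorem applied to the origin-symmetric parallelogram $\{(q,p)\in\R^2:|q|<t,\ |qx-p|\le\frac1t\}$, whose area is $4$, but extracting the strict bound $|q|<t$ that way makes the endpoint discussion slightly more awkward, so I would favour the pigeonhole route. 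Honestly there is no real obstacle here: all of the content is the elementary counting, and the only thing that needs any thought is the open/closed bookkeeping in the two inequalities.
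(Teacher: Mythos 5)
Your argument is correct. The paper does not supply its own proof of Theorem~\ref{dirichlet theorem} --- it simply refers the reader to \cite{Cassels} and \cite{Schmidt} --- so there is no in-paper argument to compare against; your variant of the pigeonhole proof, using the $N=\lceil t\rceil$ points $\{0\},\{x\},\dots,\{(N-1)x\}$ on $\R/\Z$ together with the shortest-arc estimate (instead of the more common ``$N+1$ points in $N$ boxes'' version), correctly delivers both $|qx-p|\le 1/t$ and the strict bound $|q|<t$ at once, including the degenerate case where two fractional parts coincide.
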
 

\noindent 
{See e.g.\ \cite[Theorem I.I]{Cassels} or \cite[Theorem I.1A]{Schmidt}.} In many cases {the above} theorem has been applied through its corollary  {\cite[Corollary I.1B]{Schmidt}, 
predating Dirichlet's work}:
\begin{cor}[Dirichlet's Corollary] \label{dirichlet corollary} For any $x\in \mathbb{R}$ there exist infinitely many $q \in \mathbb{Z}$ such that
\eq{dc}{ |qx - p|{\,<\,} \frac1{|q|} \quad \text{for some }p\in\Z.}
\end{cor}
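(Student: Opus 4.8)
The plan is to deduce the Corollary from \dt\ (Theorem~\ref{dirichlet theorem}) simply by letting the parameter $t$ tend to infinity; the only point needing care is to guarantee that one obtains infinitely many distinct solutions $q$ rather than the same few repeatedly, and this is where the irrationality of $x$ enters.

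I would argue by contradiction. Suppose the set
\[
S\df\{q\in\Z\nz:\ |qx-p|<1/|q|\text{ for some }p\in\Z\}
\]
is finite, and choose $Q\in\N$ with $S\subseteq\{q:1\le|q|\le Q\}$. If $x=a/b$ with $b\in\N$ were rational, then for every $k\in\N$ the pair $q=kb$, $p=ka$ would give $|qx-p|=0<1/|q|$, so $S$ would contain the infinite set $b\Z\nz$, which is impossible. Hence $x$ is irrational, and consequently $\delta\df\min_{1\le|q|\le Q}\|qx\|>0$, where $\|y\|\df\min_{p\in\Z}|y-p|$.

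Now I would apply \dt\ with any real $t>\max(Q+1,\,1/\delta)$. This produces $q\in\Z\nz$ and $p\in\Z$ with $|qx-p|\le 1/t$ and $|q|<t$. The strict inequality $|q|<t$ does the work: on one hand it yields $|qx-p|\le 1/t<1/|q|$, so $q\in S$; on the other hand $\|qx\|\le|qx-p|\le 1/t<\delta$, which by the definition of $\delta$ forces $|q|>Q$. Thus $q\in S$ with $|q|>Q$, contradicting the choice of $Q$. Therefore $S$ is infinite, which is exactly the assertion of the Corollary. (Equivalently, one can build the solutions inductively: having found $q_1,\dots,q_k$, apply \dt\ with $t$ exceeding both $\max_i|q_i|$ and $1/\min_{1\le|q|\le\max_i|q_i|}\|qx\|$ to get a new $q_{k+1}$ of strictly larger modulus.)

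I do not expect any genuine obstacle: this is a routine passage from the uniform statement (Theorem~\ref{dirichlet theorem}) to its asymptotic consequence. The one place demanding attention is the appeal to irrationality of $x$ to keep $\|qx\|$ bounded away from $0$ on each bounded range of $q$ — this is precisely what stops the solutions from accumulating at finitely many values, and it is why the rational case must be (trivially) dispatched by hand.
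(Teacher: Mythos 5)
Your proof is correct. The paper does not include a proof of this corollary --- it is stated as a classical fact with a citation to Schmidt --- but your argument is the standard textbook deduction from Dirichlet's Theorem: dispatch rational $x$ trivially via multiples of the denominator, and for irrational $x$ use the positivity of $\delta=\min_{1\le|q|\le Q}\langle qx\rangle$ (the paper writes $\langle\cdot\rangle$ for your $\|\cdot\|$) together with the strict inequality $|q|<t$ to force the solution produced at large $t$ to be a new one with $|q|>Q$.
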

\noindent The two statements above give a rate of approximation which works for all $x$ and serve as a beginning of the {\sl metric theory of \da}, which is concerned with understanding sets of $x$ satisfying conclusions similar to those of Theorem \ref{dirichlet theorem} and Corollary \ref{dirichlet corollary} with
the right hand sides of \equ{dt} and \equ{dc} replaced by faster decaying functions of $t$ and $|q|$ respectively.
Those sets are very well studied in the setting of Corollary \ref{dirichlet corollary}. Indeed, for a function {$\psi: [t_0,\infty) \to \R_+$, where $t_0\ge 1$ is fixed,} let us define
${W}(\psi)$, the set of {\sl $\psi$-approximable} real numbers,
to be the set of $x\in\R$ for which
there exist infinitely many $q\in\Z$ such that
\eq{dcpsi}{ |qx - p|< \psi(|q|) \quad \text{for some }p\in\Z.}

In what follows we will use the notation $\psi_1(t) = 1/t$.
Then Corollary \ref{dirichlet corollary} asserts that ${W}(\psi_1) = \mathbb{R}$. It is well known that there exists $c > 0$ such that ${W}(c\psi_1) \ne \mathbb{R}$. {{In fact,} numbers which do not belong to ${W}(c\psi_1) $ for some $c > 0$ {(equivalently,  irrational numbers whose continued fraction coefficients are uniformly bounded)} are called {\sl badly approximable}.  
 It is known that such numbers form a set of full \hd\  {\cite{Jarnik}}.
However the Lebesgue measure of the set of badly approximable numbers is zero; in other words, ${W}(c\psi_1)$ is co-null for any $c > 0$. Precise conditions for the Lebesgue measure of ${W}(\psi)$ to be zero or full are given by
\begin{thm}[Khintchine's Theorem]\label{Khintchine theorem}
Given a non-increasing $\psi$,
the set ${W}(\psi)$ has zero (resp.\ full) measure if and only if the series $\sum_k\psi(k)$ converges (resp.\ diverges). \end{thm}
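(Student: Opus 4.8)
The plan is to prove the two implications separately, after the standard reductions: by periodicity it suffices to treat $x\in[0,1)$; the rational $x$ form a null set; and by monotonicity of $\psi$ one may assume $n\psi(n)\le1$ for all $n$ (otherwise replace $\psi$ by $\min(\psi,\psi_1)$, which changes neither the divergence of $\sum_n\psi(n)$ nor, by Dirichlet's Corollary, the measure-theoretic conclusion for $W$). Write $\|y\|$ for the distance from $y$ to $\Z$, so that $x\in W(\psi)$ means $\|qx\|<\psi(q)$ for infinitely many $q\in\N$. The convergence half is then immediate from Borel--Cantelli: $A_q:=\{x\in[0,1):\|qx\|<\psi(q)\}$ is a union of at most $q+1$ arcs of length $2\psi(q)/q$, so $|A_q|\le2\psi(q)$ (using $\psi(q)\le1/q$); if $\sum_k\psi(k)<\infty$ then $\sum_q|A_q|<\infty$ and $W(\psi)=\limsup_qA_q$ is null.

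For the divergence half I would pass to the continued fraction expansion $x=[a_0;a_1,a_2,\dots]$ with convergents $p_k/q_k$ and use two classical facts: the law of best approximation, $\|qx\|\ge\|q_kx\|$ whenever $1\le q<q_{k+1}$; and the identity $\|q_kx\|=(\alpha_{k+1}q_k+q_{k-1})^{-1}$ with $\alpha_{k+1}=[a_{k+1};a_{k+2},\dots]\in[a_{k+1},a_{k+1}+1)$, which gives $\|q_kx\|^{-1}\asymp a_{k+1}q_k$. Together with the monotonicity of $\psi$ (if $\|qx\|<\psi(q)$ and $q_k\le q<q_{k+1}$ then $\|q_kx\|\le\|qx\|<\psi(q)\le\psi(q_k)$), these yield
\[ x\in W(\psi)\ \Longleftrightarrow\ \|q_kx\|<\psi(q_k)\text{ for infinitely many }k\ \Longleftrightarrow\ a_{k+1}(x)\ \ge\ \frac{1}{q_k(x)\,\psi\big(q_k(x)\big)}\text{ for infinitely many }k,\]
the last equivalence being valid up to a bounded additive shift in the threshold that does not affect whether the event occurs infinitely often.

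Next I would run a Borel--Cantelli argument adapted to this $x$-dependent threshold. Let $\mathcal F_k=\sigma(a_1,\dots,a_k)$; on a cylinder $C$ of rank $k$ the denominator $q_k$ is constant, $C$ is an interval of length $\asymp q_k^{-2}$, and by bounded distortion of the Gauss map $|\{x\in C:a_{k+1}(x)\ge N\}|/|C|\asymp N^{-1}$ uniformly in $k,C,N$; hence, using $q_k\psi(q_k)\le1$, the conditional probability of $\{a_{k+1}\ge(q_k\psi(q_k))^{-1}\}$ given $\mathcal F_k$ is $\asymp q_k\psi(q_k)$. By the conditional (L\'evy) Borel--Cantelli lemma the latter event occurs infinitely often on a set of full (resp.\ null) measure exactly when $\sum_k q_k(x)\,\psi\big(q_k(x)\big)$ diverges (resp.\ converges) for a.e.\ $x$; this yields a clean $\{0,1\}$ dichotomy, and reduces the whole theorem to the comparison
\[ \textstyle\sum_n\psi(n)=\infty\ \Longleftrightarrow\ \sum_k q_k(x)\,\psi\big(q_k(x)\big)=\infty\quad\text{for a.e. }x.\]

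One direction, $\Leftarrow$, holds for every irrational $x$ and is elementary: monotonicity gives $q_k\psi(q_k)\le3\sum_{q_k/2\le n<q_k}\psi(n)$, and since $q_{k+2}>2q_k$ the ranges $[q_k/2,q_k)$ are pairwise disjoint within each parity class of $k$, whence $\sum_k q_k\psi(q_k)\le6\sum_n\psi(n)$ (this re-proves the convergence half). The reverse implication is the crux, and the step I expect to be the main obstacle: a long block $[q_k,q_{k+1})$ — which arises precisely when $a_{k+1}$ is large — contributes heavily to $\sum_n\psi(n)$ yet is attached to the single term $q_k\psi(q_k)$, so no term-by-term estimate works and one must control how often such blocks occur. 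Here I would invoke the ergodic theory of the Gauss map: Birkhoff's theorem applied to the integrable function $\ln a_1$ gives $\sum_{j\le k}\ln a_{j+1}=O(k)$ and $\ln q_k\sim\gamma k$ for a.e.\ $x$, and combining this logarithmic-scale control of the block lengths with the monotonicity of $\psi$ shows that the passage from $\sum_n\psi(n)$ to $\sum_k q_k\psi(q_k)$ loses only a factor negligible against the partial sums, so divergence is preserved. Finally, I would note that monotonicity is used essentially throughout (for general $\psi$ the analogue is the Duffin--Schaeffer conjecture, a far deeper statement), and that an alternative to the continued-fraction route is a direct second-moment estimate for the $A_q$ combined with Gallagher's zero--one law, where monotonicity enters via $\sum_{q\le Q,\ d\mid q}\psi(q)\le\frac1d\sum_{q\le Q}\psi(q)$ and the pair correlations with large $\gcd$ are the analogous obstacle.
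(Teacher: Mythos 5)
The paper does not actually prove Khintchine's Theorem --- it is cited as background (Theorem~\ref{Khintchine theorem}) with no proof given --- so your proposal can only be judged on its own merits and against the techniques the paper does develop (Lemma~\ref{convergents}, Theorem~\ref{lemma 3}, Corollary~\ref{with qn}), which are the Dirichlet-side analogues of the steps you take.

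Your convergence direction is fine, both in the direct Borel--Cantelli form and via the inequality $\sum_k q_k\psi(q_k)\le 6\sum_n\psi(n)$. The reduction of the divergence direction is also sound in outline: the best-approximation law and the identity $\|q_kx\|=(\alpha_{k+1}q_k+q_{k-1})^{-1}$ give, up to harmless constants, the equivalence $x\in W(\psi)\Leftrightarrow a_{k+1}\gtrsim (q_k\psi(q_k))^{-1}$ i.o.\ (the analogue of Lemma~\ref{convergents} combined with \equ{q2a}), and the L\'evy conditional Borel--Cantelli lemma, using that $q_k$ is $\mathcal F_k$-measurable and $\mu(a_{k+1}\ge N\mid\mathcal F_k)\asymp N^{-1}$, identifies this i.o.\ event a.e.\ with $\{\sum_k q_k(x)\psi(q_k(x))=\infty\}$.

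The gap is exactly where you predict it, and I don't think your sketch closes it. You need: if $\sum_n\psi(n)=\infty$ then $\sum_k q_k(x)\psi(q_k(x))=\infty$ for a.e.\ $x$. Invoking $\log q_k\sim\gamma k$ gives only $q_k=e^{\gamma k+o(k)}$, i.e.\ sub-exponential multiplicative slack; since $q\mapsto q\psi(q)$ is in general neither monotone nor slowly varying (only $\psi$ is monotone), this slack is not negligible. Concretely, the natural comparison is to move to dyadic scales: with $b^k\le q_k\le B^k$ one would like $q_k\psi(q_k)\gtrsim B^k\psi(B^k)$ (or $\gtrsim b^k\psi(b^k)$) and then appeal to Cauchy condensation, but neither inequality follows from $\psi$ non-increasing alone. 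This is precisely why the paper's Corollary~\ref{with qn} imposes that the auxiliary function $\Psi$ be \emph{non-decreasing} (equivalently, for Theorem~\ref{Main Theorem}, that $t\psi(t)$ be non-decreasing): that extra monotonicity is what lets one replace $\Psi(q_k)$ by $\Psi(b^k)$ or $\Psi(B^k)$ and condense. Khintchine's Theorem carries no such hypothesis on $t\psi(t)$, so the same device is unavailable, and ``Birkhoff + monotonicity of $\psi$'' is not enough to show divergence is preserved under the substitution $n\leadsto q_k(x)$. To complete a continued-fraction proof one must control not just the growth rate of $q_k$ but the distribution of $\log q_k$ modulo $O(1)$ (or fall back on the classical overlap estimate $\lambda(A_q\cap A_{q'})\lesssim\psi(q)\psi(q')$ for the arcs $A_q$ together with a Paley--Zygmund/Gallagher argument, the alternative you mention but do not develop). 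As written, the divergence half is incomplete.
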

\noindent Quite surprisingly, {it seems that }no such clean statement has yet been proved in the set-up of Theorem \ref{dirichlet theorem}. This is the aim of the present paper. 

\smallskip
We start by introducing the following definition: { for $\psi$ as above, let $D(\psi)$ denote the set of $x\in \mathbb{R}$ for which 
the system 
\eq{dtpsi}{ |qx - p|
< \psi(t)
\ \ \ \mathrm{and}
\ \ |q|
< t}
has a nontrivial integer solution for all large enough $t$.  Elements of $D(\psi)$ will be called $\psi$-$Dirichlet$.
Notice that this definition arises by replacing} ``$\le \psi_1(t)$'' in \equ{dt} with ``$ <\psi(t)$'' and demanding the existence of nontrivial integer solutions for all $t$ except those belonging to a bounded set. {Here are some elementary observations:}
{
\begin{itemize}
\item If $\psi$ is non-increasing, which will be our standing assumption, one can without loss of generality restrict to $t\in\N$: indeed, to solve \equ{dtpsi}  it is enough to find a solution with $t$ replaced by $\lceil t \rceil$.
\item It is not hard to see 
that $D(\psi_1) = \mathbb{R}$; {more precisely, if $x\notin\Q$ (resp.\ if $x\in\Q$), the system \equ{dtpsi} with $\psi = \psi_1$  has a nonzero solution for all $t > 1$ (resp.\  for sufficiently large $t$).} \item Clearly $D(\psi)$ is contained in ${W}(\psi)$ whenever $\psi$ is non-increasing.
\end{itemize}} 
{On the other hand, one knows that} $D(\psi)$ and $W(\psi)$ differ significantly for functions $\psi$ decaying faster than {$\psi_1$}. For example, it has been observed by Davenport and Schmidt \cite{Davenport-Schmidt} that the set $D(c\psi_1)$ of $c\psi_1$-Dirichlet numbers has Lebesgue measure zero for any $c < 1$. Moreover, they showed {\cite[Theorem 1]{Davenport-Schmidt}} that {an irrational} number belongs to $D(c\psi_1)$  for some $c < 1$ if and only if it is 
badly approximable. {Thus $x \in \cup_{c<1} D(c\psi_1)$  if and only if 
the continued fraction coefficients of $x$ are uniformly bounded.} This naturally motivates the following questions:

\begin{que} \label{cf question}
Can {one} characterize $x\in D(\psi)$ in terms of its continued fraction expansion? \end{que} 

{\begin{que}\label{Dirichlet sharp} Is Dirichlet's theorem sharp in the sense that if 
{$\psi(t)<\psi_1(t)$ for {all} sufficiently large $t$},  then {there exists $x\in\R$ which is not $\psi$-Dirichlet?}
 \end{que} }

\begin{que} \label{dirichlet question}
What is a necessary and sufficient condition on 
$\psi$ (presumably, expressed in the form of convergence/divergence of a certain series) guaranteeing that
the set $D(\psi)$ has zero/full measure? \end{que} 

\noindent 
In this paper we answer {Questions   \ref{cf question} and \ref{Dirichlet sharp}} in the affirmative and give {an answer} to 
{Question   \ref{dirichlet question}} under {an additional} assumption that {the function $t\mapsto t\psi(t)$}  is non-decreasing.  Specifically, we will prove the following:
{\begin{thm} \label{Sharpness} If $\psi:[t_0,\infty)\rightarrow \R_+$ is non-increasing and $\psi(t)<\psi_1(t)$ for sufficiently large $t$, then $D(\psi)\neq \R$. \end{thm}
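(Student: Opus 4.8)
The plan is to construct an explicit irrational $x\in(0,1)$ — it is enough to look in $(0,1)$, since $(p,q)$ solves \equ{dtpsi} for $x$ precisely when $(p+q,q)$ solves it for $x+1$, so $D(\psi)$ is invariant under translation by $\Z$ — such that the system \equ{dtpsi} has no nonzero integer solution for arbitrarily large $t$; this immediately gives $x\notin D(\psi)$. The number $x$ will be prescribed through its continued fraction expansion $x=[0;a_1,a_2,\dots]$, and the argument uses two classical facts (see, e.g., \cite{Schmidt} or \cite{Cassels}). Writing $p_k/q_k$ for the convergents ($q_{-1}=0$, $q_0=1$, $q_k=a_kq_{k-1}+q_{k-2}$), $\alpha_k=[a_k;a_{k+1},\dots]>a_k\ge1$, and $\|\,\cdot\,\|$ for distance to $\Z$, one has the identity
\[
\|q_n x\|=\frac{1}{q_{n+1}+q_n/\alpha_{n+2}}\qquad(n\ge1),
\]
and $q_n$ attains $\min\{\|qx\|:0<q<q_{n+1}\}$. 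Consequently, for $t=q_{n+1}$ the system \equ{dtpsi} has a nonzero solution if and only if $\|q_n x\|<\psi(q_{n+1})$, hence it has \emph{no} solution as soon as
\[
q_{n+1}+q_n/\alpha_{n+2}\ \le\ \frac{1}{\psi(q_{n+1})}
\qquad\Longleftrightarrow\qquad
\alpha_{n+2}\ \ge\ \frac{q_n}{1/\psi(q_{n+1})-q_{n+1}},
\]
provided the denominator on the right is positive.

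The construction is then an easy induction that forces this last inequality infinitely often. Fix $T\ge t_0$ with $\psi(t)<\psi_1(t)=1/t$ for all $t\ge T$, and start with any integer $a_1\ge T$, so $q_1=a_1\ge T$. Given $a_1,\dots,a_m$ (so $q_{m-1},q_m$ are determined) with $q_m\ge T$, set $a_{m+1}=1$, so that $q_{m+1}=q_m+q_{m-1}>q_m\ge T$, and then set
\[
a_{m+2}=\bigl\lceil\, q_m\bigl(1/\psi(q_{m+1})-q_{m+1}\bigr)^{-1}\,\bigr\rceil ,
\]
a well-defined positive integer because $q_{m+1}\ge T$ makes $1/\psi(q_{m+1})-q_{m+1}>0$. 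Since $\alpha_{m+2}\ge a_{m+2}$, the inequality above holds at $n=m$, so \equ{dtpsi} has no nonzero solution when $t=q_{m+1}$; and since $q_{m+2}=a_{m+2}q_{m+1}+q_m>q_{m+1}\ge T$, the step may be repeated with $m$ replaced by $m+2$. The resulting $x=[0;a_1,a_2,\dots]\in(0,1)$ makes \equ{dtpsi} unsolvable at $t=q_2,q_4,q_6,\dots\to\infty$, so $x\notin D(\psi)$ and $D(\psi)\ne\R$.

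The one point requiring care — and the only place the hypothesis is used — is the positivity of $1/\psi(q_{n+1})-q_{n+1}$, which is exactly what keeps the partial quotient $a_{n+2}$ finite; this is precisely the content of $\psi<\psi_1$ along the unboundedly large values $q_{n+1}$. Everything else is routine: the two quoted facts are classical (the first follows in one line from $q_n x-p_n=(-1)^n(q_n\alpha_{n+1}+q_{n-1})^{-1}$ together with $q_n\alpha_{n+1}+q_{n-1}=q_{n+1}+q_n/\alpha_{n+2}$; the second is the best-approximation property of convergents), and the induction needs only bookkeeping. The hypothesis cannot be dropped: under the weaker assumption $\psi\le\psi_1$ the quantity $1/\psi(q_{n+1})-q_{n+1}$ may vanish and the scheme collapses, in line with $D(\psi_1)=\R$.
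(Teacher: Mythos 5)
Your proof is correct and takes essentially the same route as the paper: both construct $x\notin D(\psi)$ by prescribing its continued fraction expansion and choosing partial quotients large enough that the system \equ{dtpsi} has no nonzero solution along the sequence $t=q_k$. The paper packages this through Lemma~\ref{label1}(ii) (a criterion on $a_{n+1}a_n$ derived from Cassels' identity $q_n\langle q_{n-1}x\rangle=(1+\theta_{n+1}\phi_n)^{-1}$), while you argue directly from $\|q_nx\|=(q_{n+1}+q_n/\alpha_{n+2})^{-1}$ and the best-approximation property of convergents; the underlying construction is the same, though the paper's version makes \equ{dtpsi} insoluble at $t=q_n$ for \emph{all} large $n$ rather than just along $q_2,q_4,\dots$.
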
}
\begin{thm} \label{Main Theorem} Let $\psi: [t_0,\infty) \rightarrow \R_+$ be non-increasing, and suppose the function $t\mapsto t\psi(t)$ is non-decreasing {and \eq{lessthan1}{t\psi(t) < 1\quad\text{for all }t\ge {t_0}.}}  Then if \eq{condition}{
{\sum_n\frac{-\log\big(1-n\psi(n)\big)\big(1-n\psi(n)\big)}{n}}
= \infty \hspace{3mm}(\text{resp.} <\infty),}
then the Lebesgue measure of $D(\psi)$ (resp.\  of $D(\psi)^c$) is zero.\end{thm}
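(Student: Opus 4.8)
The plan is to translate membership in $D(\psi)$ into a statement about continued fractions and then run a Borel--Cantelli argument for the Gauss map; as a byproduct this settles Questions~\ref{cf question} and~\ref{Dirichlet sharp}. \emph{Step 1: a continued fraction description.} Write $x=[a_0;a_1,a_2,\dots]$, with convergents $p_k/q_k$ and complete quotients $x_k=[a_k;a_{k+1},\dots]$, and put $\|y\|:=\min_{p\in\Z}|y-p|$. Recall the classical identity $\|q_kx\|=\big(q_{k+1}+q_k/x_{k+2}\big)^{-1}$, the fact that $\min_{0<|q|<t}\|qx\|=\|q_kx\|$ whenever $t\in(q_k,q_{k+1}]$, and (as observed in the paper) that it suffices to test \equ{dtpsi} at integer $t$. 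Since $\psi$ is non-increasing the hardest instance within the block $(q_k,q_{k+1}]$ is $t=q_{k+1}$, so
\[
x\in D(\psi)\iff\|q_kx\|<\psi(q_{k+1})\ \text{ for all large }k.
\]
Put $\Delta(t):=1-t\psi(t)$; by \equ{lessthan1} and the hypothesis that $t\mapsto t\psi(t)$ is non-decreasing, $\Delta$ is non-increasing and takes values in $(0,\Delta(t_0)]\subset(0,1)$. Writing $x^\ast_{k+1}:=q_{k+1}/q_k=[a_{k+1};a_k,\dots,a_1]\in(a_{k+1},a_{k+1}+1)$ and using the above identity, an elementary manipulation converts the condition into
\[
x\notin D(\psi)\iff x\in E_k:=\Big\{\,x^\ast_{k+1}\,x_{k+2}\ \ge\ \tfrac1{\Delta(q_{k+1})}-1\,\Big\}\ \text{ for infinitely many }k,
\]
which is the desired description (and, specialized to $\psi=c\psi_1$, recovers the Davenport--Schmidt dichotomy); note $E_k$ is essentially $\big\{a_{k+1}a_{k+2}\gtrsim 1/\Delta(q_{k+1})\big\}$, and since $\Delta(q_{k+1})>0$ one may pick $x$ with $a_k$ growing fast enough to force $E_k$ to hold for all $k$, which proves Theorem~\ref{Sharpness}.

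\emph{Step 2: from the events $E_k$ to the series.} Work in the Gauss probability space $\big([0,1),\mu\big)$, $d\mu=\tfrac1{\log2}\tfrac{dx}{1+x}$. Conditioning on $a_1,\dots,a_{k+1}$ (which determines $q_{k+1}$ and $x^\ast_{k+1}$) turns $E_k$ into $\{x_{k+2}\ge T\}$ for an explicit $T$, and bounded distortion of the cylinder $[a_1,\dots,a_{k+1}]$ together with the standard tail estimate for continued-fraction quotients gives, since $\Delta$ is bounded away from $1$,
\[
\mu\big(E_k\mid a_1,\dots,a_{k+1}\big)\ \asymp\ \min\!\big(1,\ x^\ast_{k+1}\,\Delta(q_{k+1})\big).
\]
Averaging over $a_{k+1}$ using $\mu(a_{k+1}=a\mid a_1,\dots,a_k)\asymp a^{-2}$ yields $\mu(E_k)\asymp\mathbb E_\mu\big[\Phi(q_k)\big]$, where $\Phi(m):=\sum_{a\ge1}a^{-2}\min\!\big(1,a\Delta(am)\big)$; this $\Phi$ is non-increasing and obeys $\Delta(m)\le\Phi(m)\ll-\Delta(m)\log\Delta(m)$. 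It remains to identify the growth of $\sum_k\mu(E_k)$: first, the ``dictionary'' $\sum_k\mathbb E_\mu[h(q_k)]\asymp\sum_n h(n)/n$ holds for every non-increasing $h\ge0$ (because $q_k\ge\varphi^{k-1}$ deterministically and $\mathbb E_\mu[\log(a_k{+}1)]<\infty$, which together force $\mathbb E_\mu[\#\{k:q_k\le N\}]\asymp\log N$); and second, interchanging the $a$- and $n$-sums and using the monotonicity of $\Delta$ shows that $\sum_n\Phi(n)/n$ and $\sum_n\big(-\Delta(n)\log\Delta(n)\big)/n$ converge or diverge together (with an error of size $\sum_a\log a/a^2<\infty$). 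Hence $\sum_k\mu(E_k)$ converges precisely when \equ{condition} converges.

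\emph{Step 3: Borel--Cantelli.} If \equ{condition} converges, then $\sum_k\mu(E_k)<\infty$, so the first Borel--Cantelli lemma gives $\mu(\limsup_k E_k)=0$, i.e.\ $\mu\big(D(\psi)^c\big)=0$. If \equ{condition} diverges, one runs a quasi-independent (second moment) Borel--Cantelli argument in the spirit of the Borel--Bernstein theorem: exponential mixing of the Gauss map gives $\mu(E_j\cap E_k)\ll\mu(E_j)\mu(E_k)$ once $|j-k|$ is large — the few near-diagonal pairs being controlled separately — so the Kochen--Stone inequality (or, when $\mu(E_k)\not\to0$, mixing alone) gives $\mu(\limsup_k E_k)=1$, i.e.\ $\mu\big(D(\psi)\big)=0$.

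\emph{The main obstacle.} Step 1 and the skeleton of Step 3 are routine; the substance lies in Step 2, in chaining together the distortion estimate, the averaging over $a_{k+1}$, the dictionary and the series comparison, and in particular in coping with the fact that the threshold in $E_k$ involves $q_{k+1}$ rather than a bounded block of digits and that $\Delta$ need not be slowly varying — this is exactly what forces the use of the auxiliary function $\Phi$. The same point reappears in Step 3, where it is absorbed by conditioning on $a_1,\dots,a_k$ before invoking mixing. Alternatively, one can avoid continued fractions entirely and work on $\SL_2(\R)/\SL_2(\Z)$: $x\notin D(\psi)$ if and only if the trajectory of the lattice $\left(\begin{smallmatrix}1&0\\x&1\end{smallmatrix}\right)\Z^2$ under the diagonal flow visits, for arbitrarily large times $s$, the set of unimodular lattices having no nonzero vector of supremum norm $<\sqrt{1-\Delta(e^s)}$; that set has measure $\asymp-\Delta(e^s)\log\Delta(e^s)$, and a dynamical Borel--Cantelli lemma for the diagonal flow, exploiting its exponential mixing, yields the dichotomy — the logarithm in the measure of the target being the exact analogue of the logarithm in \equ{condition}.
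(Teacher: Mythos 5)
Your overall strategy matches the paper's: translate $\psi$-Dirichlet into a condition on continued fractions, then run a dynamical Borel--Cantelli argument for the Gauss map. Step~1 reproduces the content of the paper's Lemmas~\ref{convergents} and~\ref{label1} (the identity $q_{k+1}\langle q_k x\rangle=(1+x^*_{k+1}{}^{-1}x_{k+2}^{-1})^{-1}$ is exactly \equ{q2a} with a shift of index), and the reduction to $\{a_{k+1}a_{k+2}\gtrsim\Delta(q_{k+1})^{-1}\}$ is the same as Lemma~\ref{label1}. Steps~2--3, however, diverge from the paper in a way that creates a real gap.

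The paper does not work with your raw events $E_k$, precisely because their threshold depends on $q_{k+1}(x)$ and hence on the \emph{entire} digit block $a_1,\dots,a_{k+1}$. Instead (Corollary~\ref{with qn}) it first replaces $\Psi(q_n(x))$ by the deterministic bounds $\Psi(b^n)$ (using $q_n\ge b^n$ always, for the sufficiency direction) and $\Psi(B^n)$ (using $q_n\le B^n$ a.e., for the necessity direction), with a Cauchy--condensation argument showing the series are equiconvergent. After this decoupling the events become $T^{n-1}$-pullbacks of a set depending only on $a_1,a_2$, so Philipp's Theorems~\ref{Ph1}--\ref{Ph2} apply verbatim. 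Your Step~3 skips this decoupling: the inequality $\mu(E_j\cap E_k)\ll\mu(E_j)\mu(E_k)$ for $|j-k|$ large is not a consequence of exponential mixing of the Gauss map as stated, because $E_j$ and $E_k$ are \emph{both} measurable with respect to the early digits through $q_{j+1}$ and $q_{k+1}$ --- they are not of the form $A\cap T^{-n}B$ with $B$ fixed. This is a moving-target problem, and your remark that it is ``absorbed by conditioning on $a_1,\dots,a_k$'' names the issue without resolving it; the resolution would essentially require re-deriving the paper's Corollary~\ref{with qn}. Relatedly, your claim $\mu(E_k)\asymp\mathbb E_\mu[\Phi(q_k)]$ has a one-sided issue: averaging over $a_{k+1}=a$ gives $q_{k+1}\in[aq_k,\,2(a+1)q_k]$, so you only get $\mathbb E_\mu[\Phi(Cq_k)]\lesssim\mu(E_k)\lesssim\mathbb E_\mu[\Phi(q_k)]$ for some $C>1$; this is harmless after the dictionary step, but should be tracked. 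The remaining pieces of Step~2 (the dictionary $\sum_k\mathbb E_\mu[h(q_k)]\asymp\sum_n h(n)/n$, and the interchange showing $\sum_n\Phi(n)/n\asymp\sum_n(-\Delta(n)\log\Delta(n))/n$) are correct and do the same job as the measure computation in the paper's Theorem~\ref{lemma 3} together with Corollary~\ref{with qn}, just in the opposite order. In short: the framework is right, Step~1 is the paper's, Step~2 is a legitimate alternative route to the series, but Step~3's divergence case as written has a gap that the paper's $b^n\le q_n\le B^n$ decoupling was designed precisely to close.
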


\noindent {We note that \equ{lessthan1} is a natural assumption: if it is not satisfied, then $D(\psi) = {D(\psi_1) =\, }\R$.


{ {As an example},  taking  $\psi = c\psi_1$ 
in Theorem \ref{Main Theorem}  makes the 
{sum} in \equ{condition} {equal to}
$$
{\sum_n\frac{-c\log(1-c)}n}
\,.$$ Thus we recover the aforementioned} result of Davenport and Schmidt stating that $D(c\psi_1)$ has measure zero for $c<1$.  
{Here are two more examples\footnote{The functions  below are not non-increasing, but \textit{eventually} decreasing; clearly only the eventual behavior of $\psi$ is relevant.}:}

\smallskip
\begin{itemize}
\item
{If $\psi(t)=\frac{1-at^{-k}}{t}$  for $a > 0$, $k\geq0$, then  the 
{sum} in \equ{condition} 
{converges/diverges if and only if so does}
$$\int_1^\infty \frac{-\log (t^{-k})t^{-k}}{t} \, dt= \int_1^\infty \frac{k\log t }{t^{k+1}} \, dt. $$Thus $D(\psi)
$ has full measure whenever $k>0$.}  
\smallskip

\item {If $\psi(t)= \frac{1-a(\log t)^{-k}}{t}$ for $a > 0$, $k\geq0$, we are led to consider $$\int_e^\infty \frac{k\log\log t }{t(\log t)^{k}}\, dt = \int_1^\infty \frac{k\log u}{u^{k}} \, du.$$ In this case $D(
\psi)$ has full measure if $k>1$ and zero measure otherwise.}
\end{itemize}

\smallskip
 {The structure of the paper is as follows.  Theorem  \ref{Sharpness} is proved 
in the next section, following some lemmas expressing the $\psi$-Dirichlet property via continued fractions. In  \S\ref{main proof}  we discuss dynamics of the Gauss map $x\mapsto \frac{1}{x}- \lfloor\frac{1}{x}\rfloor$ in the unit interval and, following \cite{Philipp}, establish 
a dynamical Borel-Cantelli lemma. {This Borel-Cantelli lemma is then used to prove Theorem  \ref{Main Theorem}.} In the last section of the paper we discuss possible higher-dimensional generalizations.}

\medskip

\noindent { {\bf Acknowledgements}. The authors are grateful to   Mumtaz Hussain and Bao-Wei Wang for helpful discussions and to an anonymous referee for useful comments.}

\section{Continued fractions}\label{cf}

 \noindent Denote by $\langle x\rangle$ the distance from $x$ to the nearest integer. Throughout the sequel, $a_n=a_n(x)$ ($n=1,2,...$) will denote the $n$th entry in the continued fraction expansion {of $x\in [0,1)$,} and $q_n=q_n(x)$ will refer to the denominator of the $n$th convergent to $x$. That is
 $$[a_1(x),a_2(x),...,a_n(x)]= \frac{1}{a_1+\frac{1}{a_2+\frac{1}{...+\frac{1}{a_n}}}}=  \frac{p_n}{q_n}.$$
with $p_n$, $q_n \in \N$ coprime. If we take $q_0=1$,  $\{ q_n \}_{n=0}^\infty$ may be defined as the increasing sequence of positive integers with the property $\langle q_nx \rangle< \langle qx \rangle$ for all positive integers $q<q_n$.   The sequences $\{a_n\}$, $\{q_n\}$ are 
{related by the recurrence \eq{recurrence}{q_n=a_nq_{n-1}+q_{n-2}.}} 
  We refer the reader to \cite{Khintchine} 
 or the first chapter of [Ca] for background on the theory of continued fractions\footnote{Note however that { Cassels' }definition of the sequence $\{q_n\}$ differs from that of Khintchine by one index. That is, Cassels has $q_1=1, q_2=a_1,...$ .  We have adopted Khintchine's notation.}.  

\smallskip

We prefer to work with $x$ for which the sequences $q_n(x)$, $a_n(x)$ do not terminate; that is,  
{we} exclude {the case $x\in\Q$}.  Since all the properties that concern us are invariant under translation by $\Z$, we 
{will only consider} $x\in[0,1)\ssm \Q$.

\begin{lem}\label{convergents}  Let $\psi: [{t_0},\infty)\rightarrow {\R_+}$ be non-increasing.  {Then} $x\in [0,1]\ssm \Q$ is $\psi$-Dirichlet if and only if $\langle q_{n-1}x\rangle< \psi (q_n)$ for sufficiently large $n$.  \end{lem}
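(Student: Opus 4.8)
The plan is to translate the $\psi$-Dirichlet property into a pointwise comparison with the best-approximation function $\delta_x(t):=\min_{0<q<t}\langle qx\rangle$, and then to evaluate this comparison along the convergent denominators. Since $\psi$ is non-increasing, the system \equ{dtpsi} need only be tested at integral $t$, and for a fixed $t$ it has a nontrivial integer solution exactly when $\delta_x(t)<\psi(t)$: for a given nonzero $q$ the best $p$ realizes $|qx-p|=\langle qx\rangle$, and $\langle(-q)x\rangle=\langle qx\rangle$, so the infimum of $|qx-p|$ over nontrivial solutions with $|q|<t$ is $\delta_x(t)$. Hence $x$ is $\psi$-Dirichlet if and only if $\delta_x(t)<\psi(t)$ for all sufficiently large $t$.

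The single continued-fraction fact I would use is the identity $\delta_x(t)=\langle q_{n-1}x\rangle$ for $q_{n-1}<t\le q_n$. Indeed, by the property recorded above the $q_n$ are precisely the ``records'' of $q\mapsto\langle qx\rangle$, so $\langle q_nx\rangle$ strictly decreases; and no record is skipped between consecutive $q_n$'s, since the smallest $q^*$ with $q_{n-1}<q^*<q_n$ and $\langle q^*x\rangle<\langle q_{n-1}x\rangle$ would satisfy $\langle q^*x\rangle<\langle q'x\rangle$ for all positive $q'<q^*$ and hence be a convergent denominator strictly between $q_{n-1}$ and $q_n$. Thus for $0<q<q_n$ the value $\langle qx\rangle$ is minimized at $q=q_{n-1}$, which is the asserted identity.

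Given this, both directions are short. If $x$ is $\psi$-Dirichlet, then for all large $n$ (so that $q_n\ge t_0$ and $t=q_n$ lies in the range where solvability holds) taking $t=q_n$ yields $\langle q_{n-1}x\rangle=\delta_x(q_n)<\psi(q_n)$. Conversely, suppose $\langle q_{n-1}x\rangle<\psi(q_n)$ for all $n\ge N$, with $N$ enlarged so that $q_N\ge t_0$. For $t\ge t_0$ with $t>q_{N-1}$, let $n$ be the index with $q_{n-1}<t\le q_n$; then $n\ge N$, and $\delta_x(t)=\langle q_{n-1}x\rangle<\psi(q_n)\le\psi(t)$, the last inequality because $\psi$ is non-increasing and $t\le q_n$. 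Hence \equ{dtpsi} is solvable for every such $t$, i.e.\ $x$ is $\psi$-Dirichlet.

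The one point needing genuine care is the continued-fraction claim that $q_{n-1}$, rather than merely $q_n$ itself, minimizes $\langle qx\rangle$ over the whole block $0<q<q_n$ --- that the sequence of best-approximation denominators skips nothing; the rest is bookkeeping about indices and about keeping $t$ and $q_n$ inside the domain $[t_0,\infty)$ of $\psi$. Note that monotonicity of $\psi$ is used exactly once, in the ``if'' direction, where it is essential: it converts the inequality at the integer $q_n$ into one at every real $t\le q_n$.
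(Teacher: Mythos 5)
Your proof is correct and follows essentially the same route as the paper's: both directions hinge on the best-approximation property that $\langle q_{n-1}x\rangle \le \langle qx\rangle$ for $0<q<q_n$ (your ``no records skipped'' identity $\delta_x(t)=\langle q_{n-1}x\rangle$ on $(q_{n-1},q_n]$), and the converse uses monotonicity of $\psi$ to pass from $\psi(q_n)$ to $\psi(t)$. The wrapper via $\delta_x$ and your explicit proof of the record property are minor elaborations of steps the paper takes for granted, not a different argument.
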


\begin{proof}  Suppose  $x\in [0,1]\ssm \Q$ 
is $\psi$-Dirichlet.  
Then for sufficiently large $n$ there exists a positive integer, $q$, with $\langle qx\rangle < \psi(q_n)$, $q<q_n$.  Since  $\langle q_{n-1}x\rangle\leq \langle qx\rangle$ whenever $q<q_n$, we have $\langle q_{n-1}x\rangle < \psi(q_n)$ for sufficiently large $n$.  Conversely, suppose $\langle q_{n-1}x\rangle< \psi (q_n)$ for $n\geq N$.  Then for a real number $t>q_N$, write $q_{n-1}<t\leq q_n$.  The {inequality} $\langle q_{n-1}x\rangle< \psi(t)$ {follows} since $\psi$ is non-increasing.   Thus $x$ is $\psi$-Dirichlet.
\end{proof}

Lemma \ref{convergents} is one step toward rephrasing {the $\psi$-Dirichlet property} 
of $x$ in terms of the growth of the continued fraction entries, $a_n(x)$.  
 {For fixed  {$x=[a_1,a_2,...]$,} consider the sequences $$\theta_{n+1}=[a_{n+1}, a_{n+2}, ...], \quad \phi_n= [a_n, a_{n-1},..., a_1].$$  These are related to the sequences $q_n$, $\langle q_{n-1}x\rangle$ by the identity   
\eq{q2a}{(1+\theta_{n+1}\phi_n)^{-1}= q_n\langle q_{n-1}x\rangle}
(see \cite[\S II.2]{Cassels}\footnote{In truth, Cassels' formula reads  $(1+\theta_{n+1}\phi_n)^{-1}= q_{n+1}\langle q_nx\rangle$ because his $q_n$'s are shifted by one index, as we have already noted.}).   This is our device for passing from Lemma \ref{convergents} to continued fractions, 
allowing us to answer  Question \ref{cf question}.}

\begin{lem}\label{label1}  Let  {$x\in [0,1]\ssm \Q$,} and let $\psi:[{t_0},\infty)\rightarrow \R_+$ be non-increasing with $t\psi(t)<1$ for all $t\geq t_0$.  Then

\begin{itemize}
\item[(i)]  $x$ is $\psi$-Dirichlet if $a_{n+1}a_n \leq\dfrac{1}{4} 
\left(\big(q_n\psi(q_n )\big)^{-1}-1\right)^{-1}$ for all sufficiently large $n$.

\item[(ii)]  $x$ is \textit{not} $\psi$-Dirichlet if $a_{n+1}a_n > \left(\big(q_n\psi(q_n )\big)^{-1}-1\right)^{-1}
$ for infinitely many $n$.
\end{itemize}
\end{lem}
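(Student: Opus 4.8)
The plan is to use the identity \equ{q2a} to turn the criterion of Lemma~\ref{convergents} into an inequality about the products $\theta_{n+1}\phi_n$, and then to squeeze those products between the elementary bounds for a continued fraction in terms of its leading entry. Recall that any value $[b_1,b_2,\dots]$ with $b_i\in\N$ lies in $\big[\tfrac1{b_1+1},\tfrac1{b_1}\big]$; in particular, since $\theta_{n+1}=[a_{n+1},a_{n+2},\dots]$ is an infinite expansion, $\tfrac1{a_{n+1}+1}<\theta_{n+1}<\tfrac1{a_{n+1}}$ strictly, while $\phi_n=[a_n,\dots,a_1]$ satisfies $\tfrac1{a_n+1}\le\phi_n\le\tfrac1{a_n}$ (both strict once $n\ge3$). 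Set also $c_n\df\big(q_n\psi(q_n)\big)^{-1}-1$; since $q_n\to\infty$ and $t\psi(t)<1$ for $t\ge t_0$, we have $c_n>0$ for all large $n$, so the thresholds $\tfrac14 c_n^{-1}$ and $c_n^{-1}$ appearing in the statement are meaningful there.

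First I would rearrange \equ{q2a}: from $q_n\langle q_{n-1}x\rangle=(1+\theta_{n+1}\phi_n)^{-1}$ one gets that $\langle q_{n-1}x\rangle<\psi(q_n)$ is equivalent to $1+\theta_{n+1}\phi_n>\big(q_n\psi(q_n)\big)^{-1}$, i.e.\ to $\theta_{n+1}\phi_n>c_n$. Hence, by Lemma~\ref{convergents}, $x$ is $\psi$-Dirichlet iff $\theta_{n+1}\phi_n>c_n$ for all large $n$, and $x$ is \textit{not} $\psi$-Dirichlet iff $\theta_{n+1}\phi_n\le c_n$ for infinitely many $n$. For part (i), using $a_j\ge1$ and the strict lower bound on $\theta_{n+1}$,
\[
\theta_{n+1}\phi_n\;>\;\frac1{(a_{n+1}+1)(a_n+1)}\;\ge\;\frac1{4a_{n+1}a_n}\;\ge\;c_n\quad\text{for all large }n,
\]
the last step being precisely the hypothesis $a_{n+1}a_n\le\tfrac14 c_n^{-1}$; so $x$ is $\psi$-Dirichlet. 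For part (ii), using the strict upper bound on $\theta_{n+1}$,
\[
\theta_{n+1}\phi_n\;<\;\frac1{a_{n+1}}\cdot\frac1{a_n}\;=\;\frac1{a_{n+1}a_n}\;<\;c_n
\]
for each of the infinitely many $n$ with $a_{n+1}a_n>c_n^{-1}$, whence $\theta_{n+1}\phi_n\le c_n$ infinitely often and $x$ is not $\psi$-Dirichlet.

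Once \equ{q2a} is invoked, the whole argument is bookkeeping, so I do not expect a genuine obstacle. The only points requiring care are that $c_n$ be positive for large $n$ (so the thresholds are defined and no inequality flips), and that the first estimate in part (i) be strict — which it is, thanks to $\theta_{n+1}>\tfrac1{a_{n+1}+1}$ for the infinite expansion $\theta_{n+1}$, so the boundary possibility $\phi_n=\tfrac1{a_n+1}$ (which occurs only at $n=2$ with $a_1=1$, hence is irrelevant for large $n$) is harmless. The factor $4$ is the lone source of slack between (i) and (ii) — it comes from $(a+1)(b+1)\le4ab$ for $a,b\ge1$ — and is unimportant for the measure estimates in \S\ref{main proof}, which is why Lemma~\ref{label1} is not an exact dichotomy.
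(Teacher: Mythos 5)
Your proof is correct and follows essentially the same route as the paper: pass through \equ{q2a} and Lemma~\ref{convergents} to reduce the $\psi$-Dirichlet property to the condition $\theta_{n+1}\phi_n>c_n$, then squeeze $\theta_{n+1}\phi_n$ between bounds of the form $\tfrac1{a_{n+1}a_n}$ and a quarter of that. The only (immaterial) difference is that the paper bounds $\theta_{n+1}\phi_n$ below by $\frac{1}{(a_{n+1}+\frac1{a_{n+2}})(a_n+\frac1{a_{n-1}})}$ before applying the factor $4$, whereas you use the slightly cruder but simpler $\frac{1}{(a_{n+1}+1)(a_n+1)}$; both yield the same constant.
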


\begin{proof}  Fix  {$x\in [0,1]\ssm \Q$.}  Using \equ{q2a}, Lemma \ref{convergents} becomes  \eq{label}{x\in D(\psi) \textit{ if and only if }  (1+\theta_{n+1}\phi_n)^{-1} < q_n\psi(q_n) \textit{ for all large n.}}
Since $( a_{n+1}+\frac{1}{a_{n+2}})(a_n+\frac{1}{a_{n-1}})\leq 4a_{n+1}a_n$, we have
\begin{equation*}
\begin{aligned} \left(1+\frac{1}{a_{n+1}}\cdot \frac{1}{a_n}\right)^{-1}& < (1+\theta_{n+1}\phi_n)^{-1} \\ &<
 \left(1+\frac{1}{a_{n+1}+\frac{1}{a_{n+2}}}\cdot \frac{1}{ a_n+\frac{1}{a_{n-1}}}  \right)^{-1}\leq \left(1+\frac{1}{4a_{n+1}a_n}\right)^{-1} .
 \end{aligned}
 \end{equation*}
Hence from \equ{label}, $x \in D(\psi)$ if 
$$\left(1+\frac{1}{4a_{n+1}a_n}\right)^{-1}\leq q_n\psi(q_n)$$
for sufficiently large $n$. We get the first assertion of the lemma by solving for $a_na_{n+1}$.
Similarly, $x\notin D(\psi)$ if
$$\left(1+\frac{1}{a_{n+1}a_n}\right)^{-1}\ > q_n\psi(q_n)$$
for unbounded $n$.  Solving for $a_na_{n+1}$ gives the second assertion of the lemma.  \end{proof}

{\begin{remark}\label{} Lemma \ref{label1} generalizes the aforementioned result of Davenport and Schmidt \cite[Theorem 1]{Davenport-Schmidt} stating that $x\in D(c\psi_1)$ for some $c<1$ if and only if the sequence $a_n(x)$ is uniformly bounded.
\end{remark}}

{Now we can} answer Question \ref{Dirichlet sharp} {and exhibit real numbers which are not $\psi$-Dirichlet for any non-increasing $\psi$ with $\psi(t)<\psi_1(t)$ for sufficiently large $t$}.




\begin{proof}[Proof of Theorem \ref{Sharpness}]
By the recurrence \equ{recurrence}, $q_n$ depends only on $a_1,..., a_n$.   Since $t\psi(t)<1$  { for large enough $t$}, we may construct  $x=[a_1,a_2,...]$ by successively choosing $a_{n+1}$ so that part (ii) of Lemma \ref{label1} is satisfied.  \end{proof}
     
 {\begin{remark} {We point out} that for a given $\psi$, the proof of Theorem \ref{Sharpness} is entirely constructive, since each $q_n$ is determined recursively by the preceding choice of $a_n$.  Also note that the proof constructs $x$ such that the system \equ{dtpsi} is insoluble when $t=q_n$ for {\em all sufficiently large} $n$ -- not just  for infinitely many $q_n$.  \end{remark}}

\section{{Borel-Cantelli Lemmas}}\label{main proof}

For almost every $x$, we have reduced {the $\psi$-Dirichlet property of $x$}  to the growth of {its} continued fraction entries.  The \textit{Gauss map},
\eq{Gauss Map}{T: [0,1]\ssm \mathbb{Q} \rightarrow [0,1] \ssm \mathbb{Q}, \hspace{3mm} x\mapsto x^{-1}-\lfloor x^{-1}\rfloor}
 has the convenient property $T([a_1, a_2, a_3, ...])=[a_2, a_3, a_4....]$, and it preserves the \textit{Gauss measure},
\eq{Gauss Measure}{\mu(A)= \frac{1}{\log 2}\int_A\frac{1}{1+x}\,dx.}


{
{We will use} two {results of Philipp \cite{Philipp}}
related to the mixing rate of $T$ and the divergence case of the Borel-Cantelli Lemma.  

\begin{thm}{\cite[Theorem 2.3]{Philipp}}\label{Ph1}. Let $E_n$, $n\geq 1$, be a sequence of measurable sets in a probability space  $(X, \nu)$.  Denote by $A(N,x)$ the number of integers $n\leq N$ such that $x\in E_n$.  Put $$\phi(N)=\sum_{n\leq N} \nu(E_n).$$  Suppose that there exists a convergent series $\sum_{j\geq 1} C_j$ with $C_j \geq 0$ such that for all integers $m>n$ we have\eq{mixingineq}{\nu(E_n \cap E_m) \leq \nu(E_n)\nu(E_m)+ \nu(E_m)C_{m-n}.} Then {for any  $\epsilon >0$ one has} $$A(N,x)= \phi(N)+ O_\epsilon\big(\phi^{1/2}(N)\log^{3/2+\epsilon} \phi(N)\big) 
$$ for almost all x.
\end{thm}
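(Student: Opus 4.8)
\smallskip
\noindent\textbf{Proof proposal.} The plan is to reduce the statement to a second‑moment estimate for block sums of the centred indicator functions, and then to feed that estimate into the G\'al--Koksma quantitative Borel--Cantelli lemma.

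First I would set $f_n \df \mathbf{1}_{E_n} - \nu(E_n)$, so that each $f_n$ has $\nu$-mean zero and
$$
A(N,x) - \phi(N) \;=\; \sum_{n\le N} f_n(x).
$$
It thus suffices to prove that for every $\epsilon>0$, $\sum_{n\le N}f_n(x) = O_\epsilon\big(\phi^{1/2}(N)\log^{3/2+\epsilon}\phi(N)\big)$ for a.e.\ $x$. Since $\phi$ is non-decreasing, if it stays bounded then $\sum_n\nu(E_n)<\infty$; the convergence Borel--Cantelli lemma then makes $A(N,x)$ eventually constant for a.e.\ $x$ and the assertion is trivial with an $x$-dependent constant, so we may assume $\phi(N)\to\infty$.

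Next I would estimate the $L^2(\nu)$ norm of a block sum $S_{M,N}\df\sum_{n=M+1}^{M+N}f_n$. Expanding the square,
$$
\int_X S_{M,N}^2\,d\nu \;=\; \sum_{n=M+1}^{M+N}\int_X f_n^2\,d\nu \;+\; 2\!\!\sum_{M<n<m\le M+N}\!\!\int_X f_nf_m\,d\nu .
$$
The diagonal terms satisfy $\int_X f_n^2\,d\nu=\nu(E_n)\big(1-\nu(E_n)\big)\le\nu(E_n)$, while for $n<m$ the hypothesis \equ{mixingineq} gives $\int_X f_nf_m\,d\nu=\nu(E_n\cap E_m)-\nu(E_n)\nu(E_m)\le\nu(E_m)\,C_{m-n}$. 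Summing the off-diagonal bound first over $n<m$ and then over $m$, and using $\sum_{j\ge1}C_j<\infty$, that contribution is at most $\big(\sum_{j\ge1}C_j\big)\sum_{n=M+1}^{M+N}\nu(E_n)$. Hence, with $K\df1+2\sum_{j\ge1}C_j$,
$$
\int_X S_{M,N}^2\,d\nu \;\le\; K\big(\phi(M+N)-\phi(M)\big)\qquad\text{for all }M\ge0,\ N\ge1 .
$$

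Finally I would invoke the G\'al--Koksma lemma: if measurable functions $f_n$ have block sums obeying $\int_X S_{M,N}^2\,d\nu\le K\big(\Psi(M+N)-\Psi(M)\big)$ for a non-decreasing $\Psi$, then $\sum_{n\le N}f_n=O_\epsilon\big(\Psi^{1/2}(N)\log^{3/2+\epsilon}\Psi(N)\big)$ a.e.; taking $\Psi=\phi$ yields the claim. If one wants this lemma from scratch, the argument runs as follows: choose a dyadic sequence $N_k$ with $\phi(N_k)\asymp 2^k$; Chebyshev's inequality applied to $S_{0,N_k}$ together with the first Borel--Cantelli lemma bounds $|S_{0,N_k}(x)|$ by $O(2^{k/2}k^{1/2+\epsilon})$ for a.e.\ $x$; to interpolate for $N_k<N\le N_{k+1}$ one controls the \emph{maximal} partial sum inside the block via a Rademacher--Menshov dyadic subdivision on $O(k)$ scales, applying Chebyshev and a union bound at each scale. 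It is precisely this bookkeeping — balancing the number of scales against the summability of the exceptional null sets — that costs the extra power of $\log\phi$ and is the main technical obstacle; the mixing hypothesis itself enters only through the second-moment bound above.
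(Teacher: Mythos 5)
Your proof is correct and follows essentially the same route as the source the paper cites: the paper quotes this statement verbatim from Philipp \cite[Theorem 2.3]{Philipp} without reproving it, and Philipp's own argument is exactly your reduction --- expand $\int S_{M,N}^2\,d\nu$, use \equ{mixingineq} together with $\sum_j C_j<\infty$ to get the second-moment bound $K\big(\phi(M+N)-\phi(M)\big)$, and then invoke the G\'al--Koksma quantitative Borel--Cantelli lemma (Philipp's Theorem 2.1). The one-sided bound on $\int f_nf_m\,d\nu$ is indeed sufficient since you only need an upper bound on the variance, and your handling of the bounded-$\phi$ degenerate case is also fine.
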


 {\begin{remark}\label{+z} Since $\left| \nu(E_n \cap E_m) - \nu(E_n)\nu(E_m)\right|\le \nu(E_m)$, Theorem \ref{Ph1} can be trivially strengthened: given any ${\ell}>0$, the conclusion of 
 {the theorem} holds provided the inequality \equ{mixingineq} holds whenever $m>n+{\ell}$ .  \end{remark}}

\begin{thm}\label{Ph2} {\cite[Theorem 3.2]{Philipp}}  There exist constants $c_0 > 0$ and $ 0 < {\gamma} < 1$ with the following property. Fix $\mathbf{r} = (r_1,...r_k) \in \N^k$, and write
 $$E_\mathbf{r}:= \{x \in [0,1] \ssm \Q: a_1(x) =r_1, \ a_2(x)=r_2, 
\ \dots\ ,\  a_k(x)=r_k \}.$$ 
Let $F\subset {[0,1]}$ be any measurable set.  Then for all $n\geq 0$,
\eq{overlap}{\left|\mu(E_{\mathbf{r}}\cap T^{-n-k}F) - \mu(E_{\mathbf{r}}) \mu(F)\right| \le c_0\mu(E_{\mathbf{r}})\mu(F){\gamma}^{\sqrt{n}}.}  
 \end{thm}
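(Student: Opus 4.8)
The plan is to read \equ{overlap} as a quantitative mixing (Gauss--Kuzmin--L\'evy) estimate for $T$ and to prove it through the transfer operator of the Gauss map. Write $h(x)=\frac{1}{(1+x)\log 2}$ for the density of $\mu$, and let $\mathcal{L}$ be the transfer operator of $T$ with respect to Lebesgue measure, $(\mathcal{L}f)(x)=\sum_{m\ge 1}\frac{1}{(m+x)^2}\,f\!\big(\tfrac{1}{m+x}\big)$, characterized by $\int_0^1(\varphi\circ T^j)\,f\,dx=\int_0^1\varphi\,(\mathcal{L}^j f)\,dx$ for all $j\ge 0$; recall $\mathcal{L}h=h$. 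On $E_{\mathbf{r}}$ the iterate $T^k$ is a single Möbius branch onto $(0,1)$, so $(T^k)_*(\mu|_{E_{\mathbf{r}}})$ is absolutely continuous; let $g_{\mathbf{r}}$ be the normalization to total mass $1$ of its density. A one-line change of variables, $x=\frac{p_k+p_{k-1}\omega}{q_k+q_{k-1}\omega}$ with $\omega=T^kx\in(0,1)$ and $p_i/q_i$ the convergents of $[r_1,\dots,r_k]$, gives
\[
g_{\mathbf{r}}(\omega)=\frac{1}{\mu(E_{\mathbf{r}})\log 2}\cdot\frac{1}{\big[(p_k+q_k)+(p_{k-1}+q_{k-1})\omega\big]\big[q_k+q_{k-1}\omega\big]}.
\]
Unwinding the definitions, for every measurable $F\subset[0,1]$ and every $n\ge 0$ one has $\mu(E_{\mathbf{r}}\cap T^{-n-k}F)=\mu(E_{\mathbf{r}})\int_F\mathcal{L}^n g_{\mathbf{r}}\,dx$, while $\mu(E_{\mathbf{r}})\mu(F)=\mu(E_{\mathbf{r}})\int_F h\,dx$, so that the left-hand side of \equ{overlap} equals
\[
\mu(E_{\mathbf{r}})\,\Big|\int_F\big(\mathcal{L}^n g_{\mathbf{r}}-h\big)\,dx\Big|\ \le\ \mu(E_{\mathbf{r}})\,\mu(F)\,\sup_{x\in[0,1]}\frac{\big|\mathcal{L}^n g_{\mathbf{r}}(x)-h(x)\big|}{h(x)}.
\]
Thus the whole statement is reduced to a pointwise bound on $\mathcal{L}^n g_{\mathbf{r}}-h$, and — crucially — no approximation of $F$ by cylinders is needed, because the bound we will get is uniform.

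Next I would observe that the densities $g_{\mathbf{r}}$ range over a fixed bounded family in $C^1([0,1])$, bounded below by an absolute positive constant, uniformly in $\mathbf{r}$ and $k$. This is immediate from the displayed formula together with the elementary relations $p_{k-1}\le p_k\le q_k$, $q_{k-1}<q_k$ and the standard estimate $\mu(E_{\mathbf{r}})\asymp q_k^{-2}$: one finds $g_{\mathbf{r}}\asymp 1$ and $|g_{\mathbf{r}}'|\le 2\,g_{\mathbf{r}}$, and one can also check that $\omega\mapsto(1+\omega)^2 g_{\mathbf{r}}(\omega)$ is monotone, so that $g_{\mathbf{r}}$ sits in a standard Kuzmin-type cone preserved by $\mathcal{L}$. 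This is exactly the uniform input the last step requires.

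The heart of the matter is then the quantitative Gauss--Kuzmin--L\'evy theorem: there are absolute constants $c_0>0$ and $0<\gamma<1$ such that $\sup_x\big|\mathcal{L}^n g(x)/h(x)-1\big|\le c_0\gamma^{\sqrt n}$ for every $g$ in the family above, and substituting this into the last display of the first paragraph yields \equ{overlap}. I would prove this estimate either (a) by Kuzmin's original recursion — writing $\mathcal{L}^n g=h(1+r_n)$ and tracking the pair $(\mathrm{osc}\,r_n,\ \mathrm{Lip}(r_n))$: applying $\mathcal{L}$ contracts the oscillation by a fixed factor provided the Lipschitz constant is not too large, while $\mathcal{L}$ can only enlarge the Lipschitz constant additively, and balancing these over blocks of length $\asymp\sqrt n$ produces the factor $\gamma^{\sqrt n}$ — or (b) via the spectral gap of $\mathcal{L}$ on $C^1([0,1])$ (equivalently on functions of bounded variation): $\mathcal{L}=\Pi+R$ with $\Pi f=\big(\int_0^1 f\,dx\big)\,h$ and $R$ of spectral radius $\gamma<1$, which even gives $\|\mathcal{L}^n g-\big(\int_0^1 g\,dx\big)h\|_{\infty}\le C\gamma^n\|g\|_{C^1}$, hence the claim with $\gamma^n$ in place of $\gamma^{\sqrt n}$; alternatively one may simply cite this estimate from the classical literature. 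This last input is the only genuinely hard point — the reduction to it (the Möbius change of variables, the uniform control of $g_{\mathbf{r}}$, and the transfer-operator identity) is routine.
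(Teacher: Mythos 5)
The paper does not actually prove this statement; it is cited verbatim from Philipp's 1967 paper {\cite[Theorem 3.2]{Philipp}} and used as a black box.  So there is no in-paper argument to compare yours against --- the useful comparison is with Philipp's own proof, and on that front your sketch is both correct and cleaner than what the $\gamma^{\sqrt n}$ rate suggests.

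Your reduction is sound: the change of variables $x=\frac{p_k+p_{k-1}\omega}{q_k+q_{k-1}\omega}$ has Jacobian $(q_k+q_{k-1}\omega)^{-2}$, and combined with $1+x=\frac{(p_k+q_k)+(p_{k-1}+q_{k-1})\omega}{q_k+q_{k-1}\omega}$ it yields exactly the displayed $g_{\mathbf r}$.  The transfer-operator identity $\mu(E_{\mathbf r}\cap T^{-n-k}F)=\mu(E_{\mathbf r})\int_F\mathcal L^n g_{\mathbf r}\,dx$ follows by pushing forward first by $T^k$ (using $g_{\mathbf r}$) and then $n$ more times by duality, and the final inequality $\le\mu(E_{\mathbf r})\mu(F)\sup_x\big|\mathcal L^n g_{\mathbf r}(x)/h(x)-1\big|$ is immediate.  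Your uniformity claims for $g_{\mathbf r}$ also hold: from $p_{k-1}\le p_k\le q_k$, $q_{k-1}\le q_k$ and $\mu(E_{\mathbf r})\asymp q_k^{-2}$ one gets $g_{\mathbf r}\asymp 1$ and, since each logarithmic-derivative term $\frac{b}{a+b\omega}$ with $b\le a$ is bounded by $\frac1{1+\omega}$, one has $\big|g_{\mathbf r}'/g_{\mathbf r}\big|\le\frac2{1+\omega}$, so $(1+\omega)^2 g_{\mathbf r}(\omega)$ is nondecreasing --- the cone condition.  With this uniform control, the quantitative Gauss--Kuzmin--L\'evy bound applies uniformly over $\mathbf r$ and $k$ and the theorem follows.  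Two remarks on the comparison with Philipp.  First, Philipp works with the $\psi$-mixing coefficient directly, essentially via the recursion in your route (a); your observation that a pointwise bound on $\mathcal L^n g_{\mathbf r}/h-1$ handles arbitrary measurable $F$ without any approximation by cylinder sets is the cleanest way to state why this is a two-sided, set-uniform estimate.  Second, your route (b) (spectral gap of $\mathcal L$ on $BV$ or $C^1$) in fact gives the stronger rate $\gamma^n$; the $\gamma^{\sqrt n}$ in the statement is simply the weaker bound that Kuzmin's original balancing argument produces, and which Philipp inherited.  Since $\gamma^n\le\gamma^{\sqrt n}$ for $n\ge1$, either route proves the theorem as stated.

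One small presentational caveat: to invoke ``the classical literature'' in step three you must cite a version of the Gauss--Kuzmin--L\'evy estimate that is explicit about uniformity over an initial-density family (controlled, say, by a $C^1$ or $BV$ norm), since the standard textbook statements are for a single fixed density.  Your paragraph two supplies exactly the uniform input needed, so this is a citation hygiene point, not a gap.
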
} 
 
 {As Philipp observed, this estimate admits passing to unions:}

{\begin{cor}\label{UnionMixing} Let  {$c_0
$ and $
{\gamma}
$} be as in Theorem \ref{Ph2}. Let $F\subset {[0,1]}$ be any measurable set.  Fix $k\in \N$, and let $\mathcal{R}\subset \N^k$. Then \equ{overlap} holds for all $n\geq 0$ when $E_\mathbf{r}$ is replaced with $\cup_{\mathbf{r}\in \mathcal{R}}E_{\mathbf{r}}$.
\end{cor}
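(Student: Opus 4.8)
The plan is to leverage the disjointness of the cylinder sets $E_{\mathbf{r}}$ over distinct $\mathbf{r} \in \N^k$, together with the additivity of both $\mu$ and the composition $T^{-n-k}$ on disjoint unions. First I would write $E_{\mathcal{R}} \df \bigcup_{\mathbf{r} \in \mathcal{R}} E_{\mathbf{r}}$ and observe that the sets $E_{\mathbf{r}}$, $\mathbf{r} \in \mathcal{R}$, are pairwise disjoint (they are determined by prescribing the first $k$ continued fraction entries, and distinct tuples give disjoint sets), so that $\mu(E_{\mathcal{R}}) = \sum_{\mathbf{r} \in \mathcal{R}} \mu(E_{\mathbf{r}})$, and likewise $E_{\mathcal{R}} \cap T^{-n-k}F = \bigcup_{\mathbf{r} \in \mathcal{R}}\big(E_{\mathbf{r}} \cap T^{-n-k}F\big)$ is a disjoint union, so $\mu\big(E_{\mathcal{R}} \cap T^{-n-k}F\big) = \sum_{\mathbf{r} \in \mathcal{R}} \mu\big(E_{\mathbf{r}} \cap T^{-n-k}F\big)$.

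Then the estimate follows by summing \equ{overlap} over $\mathbf{r} \in \mathcal{R}$ and applying the triangle inequality:
\begin{equation*}
\left|\mu\big(E_{\mathcal{R}}\cap T^{-n-k}F\big) - \mu(E_{\mathcal{R}})\,\mu(F)\right|
\le \sum_{\mathbf{r}\in\mathcal{R}} \left|\mu\big(E_{\mathbf{r}}\cap T^{-n-k}F\big) - \mu(E_{\mathbf{r}})\,\mu(F)\right|
\le \sum_{\mathbf{r}\in\mathcal{R}} c_0\,\mu(E_{\mathbf{r}})\,\mu(F)\,{\gamma}^{\sqrt{n}},
\end{equation*}
and the right-hand side equals $c_0\,\mu(E_{\mathcal{R}})\,\mu(F)\,{\gamma}^{\sqrt{n}}$ by the additivity just noted. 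If $\mathcal{R}$ is infinite one takes the sum as a (possibly infinite, but then trivially dominated) series; since all terms are nonnegative and $\sum_{\mathbf{r}\in\mathcal{R}}\mu(E_{\mathbf{r}}) = \mu(E_{\mathcal{R}}) \le 1$, everything converges and the rearrangement is justified by monotone convergence / absolute convergence of nonnegative series.

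I do not anticipate a genuine obstacle here — this is a routine "pass to unions" argument. The only point requiring a modicum of care is the interchange of the infinite sum with the triangle inequality when $\mathcal{R}$ is countably infinite, which is handled by the nonnegativity of the summands and countable additivity of $\mu$; and the observation that $T^{-n-k}F$ distributes over the disjoint union $E_{\mathcal{R}}$ in the intersection, which is immediate from set theory. One should also note that $E_{\mathcal{R}}$ is measurable, being a countable union of the measurable sets $E_{\mathbf{r}}$.
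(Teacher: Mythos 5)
Your proof is correct and follows essentially the same route as the paper: write the union measures as sums over the pairwise disjoint cylinder sets $E_{\mathbf{r}}$, apply the triangle inequality, invoke \equ{overlap} term by term, and resum. The extra remarks on countable additivity and convergence for infinite $\mathcal{R}$ are sound but are left implicit in the paper's version.
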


{\begin{proof} We have
\begin{equation*}
\begin{aligned}
\left|\mu(\cup_{\textbf{r}\in \mathcal{R}} E_\mathbf{r} \cap T^{-n-k}F) - \mu(\cup _{\textbf{r}\in \mathcal{R}} E_\mathbf{r}) \mu(F)\right| &= \left| \sum_{\mathbf{r}\in \mathcal{R}} \mu(E_\mathbf{r}\cap T^{-n-k}F)-\mu(E_\mathbf{r})\mu(F)\right| \\ &\le \sum_{\mathbf{r} \in \mathcal{R}} c_0\mu(E_\mathbf{r})\mu(F){\gamma}^{\sqrt{n}}= c_0\mu(\cup_{\mathbf{r}\in \mathcal{R}}E_\mathbf{r})\mu(F){\gamma}^{\sqrt{n}}.
\end{aligned}
\end{equation*} \end{proof}  

{We now combine the above statements to establish a quite general dynamical Borel-Cantelli lemma:}

\begin{lem}\label{lemma 2}  Fix $k\in \N$. Suppose $A_n$ $(n\in \mathbb{N})$ is a sequence of sets 
{such} that each $A_n$ is a union of sets of the form $E_{\mathbf{r}}$, $\mathbf{r}\in \N^k$ ($E_\mathbf{r}$ as defined in Thereom \ref{Ph2}). If $\sum_n \mu(A_n)=\infty$ (resp.\ $< \infty$), then {for} almost every (resp.\ almost no) $x\in [0,1] 
$ {one} has $T^n (x)\in A_n$ for infinitely many $n$. \end{lem}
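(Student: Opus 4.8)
The plan is to deduce Lemma \ref{lemma 2} from the two halves of the Borel--Cantelli dichotomy, treating the convergence and divergence cases separately, with the divergence case resting on Theorem \ref{Ph1} (as strengthened in Remark \ref{+z}) and Corollary \ref{UnionMixing}, and the convergence case being the easy direction.

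\textbf{Convergence case.} Suppose $\sum_n \mu(A_n) < \infty$. We want $\mu$-almost no $x$ to satisfy $T^n x \in A_n$ for infinitely many $n$; equivalently, the set $\limsup_n T^{-n} A_n$ is $\mu$-null. Since $T$ preserves $\mu$, we have $\mu(T^{-n}A_n) = \mu(A_n)$, so $\sum_n \mu(T^{-n}A_n) < \infty$, and the classical (measure-theoretic) Borel--Cantelli lemma immediately gives $\mu\big(\limsup_n T^{-n}A_n\big) = 0$. No mixing is needed here. (One should remark that $\mu$ and Lebesgue measure are mutually absolutely continuous on $[0,1]$, since $\tfrac{1}{2\log 2} \le \tfrac{1}{(1+x)\log 2} \le \tfrac{1}{\log 2}$, so "almost every/almost no" may be read with respect to either measure; this is worth stating once.)

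\textbf{Divergence case.} Suppose $\sum_n \mu(A_n) = \infty$. The goal is to show $T^n x \in A_n$ for infinitely many $n$ for $\mu$-a.e.\ $x$, which will follow if we show that the counting function $A(N,x) := \#\{n \le N : T^n x \in A_n\}$ tends to infinity a.e.; in fact we aim to apply Theorem \ref{Ph1} with $(X,\nu) = ([0,1],\mu)$ and $E_n := T^{-n}A_n$, whose conclusion $A(N,x) = \phi(N) + O_\epsilon(\phi^{1/2}(N)\log^{3/2+\epsilon}\phi(N))$ with $\phi(N) = \sum_{n\le N}\mu(A_n) \to \infty$ forces $A(N,x)\to\infty$. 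So the whole task is to verify the quasi-independence hypothesis. Here is where $A_n$ being a union of cylinders $E_{\mathbf r}$, $\mathbf r \in \N^k$, is used: write $A_n = \bigcup_{\mathbf r \in \mathcal R_n} E_{\mathbf r}$. For $m > n$, we have $T^{-n}A_n \cap T^{-m}A_m = T^{-n}\big(A_n \cap T^{-(m-n)}A_m\big)$, so by $T$-invariance $\mu(E_n \cap E_m) = \mu\big(A_n \cap T^{-(m-n)}A_m\big)$. Now $A_n$ is a union of $k$-cylinders and $A_m$ is a measurable subset of $[0,1]$; applying Corollary \ref{UnionMixing} with $F = A_m$, $\mathcal R = \mathcal R_n$, and $n$ (in the notation of the corollary) equal to $(m-n)-k$ — which is $\ge 0$ once $m - n > k$ — gives
\[
\big|\mu(A_n \cap T^{-(m-n)}A_m) - \mu(A_n)\mu(A_m)\big| \le c_0\, \mu(A_n)\,\mu(A_m)\, \gamma^{\sqrt{m-n-k}}.
\]
Hence $\mu(E_n\cap E_m) \le \mu(E_n)\mu(E_m) + \mu(E_m)\cdot c_0\gamma^{\sqrt{m-n-k}}$ whenever $m - n > k$. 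Setting $C_j := c_0 \gamma^{\sqrt{j-k}}$ for $j > k$ (and $C_j := c_0$, say, for $j \le k$), the series $\sum_{j\ge 1} C_j$ converges because $\sum_j \gamma^{\sqrt j} < \infty$ for $0 < \gamma < 1$. Thus the hypothesis \equ{mixingineq} holds for all $m > n + k$, and Remark \ref{+z} (with $\ell = k$) lets us apply Theorem \ref{Ph1} despite the inequality failing for small gaps $m - n \le k$. This completes the divergence case.

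\textbf{Main obstacle.} The only genuinely delicate point is the bookkeeping in reducing $\mu(T^{-n}A_n \cap T^{-m}A_m)$ to an instance of the union-mixing estimate: one must be careful that $A_n$ appears as the cylinder union (requiring the index shift by $k$, hence the restriction $m - n > k$ and the appeal to Remark \ref{+z}), while $A_m$ may be an arbitrary measurable set in the role of $F$ — the asymmetry between the two sets in Corollary \ref{UnionMixing} is exactly what makes this work. Everything else is a routine invocation of the cited theorems and the classical Borel--Cantelli lemma.
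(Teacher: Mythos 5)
Your proof is correct and follows essentially the same route as the paper's: the convergence direction via the classical Borel--Cantelli lemma and $T$-invariance of $\mu$, and the divergence direction by using Corollary \ref{UnionMixing} (with $F=A_m$ and the corollary's exponent equal to $m-n-k$) to verify the quasi-independence hypothesis \equ{mixingineq} for gaps $m-n\geq k$, invoking Remark \ref{+z} to discard the small gaps, and then applying Theorem \ref{Ph1}. The only trivial slip is that $(m-n)-k\geq 0$ already holds for $m-n\geq k$ (not only $m-n>k$), but that changes nothing.
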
 \begin{proof} The convergence case follows from the Borel-Cantelli Lemma and the fact that $\mu$ is $T$-invariant.  {Suppose $\sum_n \mu(A_n) =\infty$. For $m\geq n+k$ write} 
$$\mu(T^{-n}A_n\cap T^{-m}A_m)= \mu(A_n \cap T^{-(m-n)}A_m)\leq \mu(A_n)\mu(A_m)+c_0\mu(A_m)\mu(A_n){\gamma}^{\sqrt{m-n-k}}$$
$$\leq \mu(A_n)\mu(A_m)+\mu(A_m)c_0{\gamma}^{\sqrt{m-n-k}}=\mu(T^{-n}A_n)\mu(T^{-m}A_m)+\mu(T^{-m}A_m)c_0{\gamma}^{\sqrt{m-n-k}} $$
\noindent for ${c_0,\gamma}$
as in Theorem \ref{Ph2}. 
 The sets $T^{-n}A_n$ therefore satisfy the condition of  {Theorem} \ref{Ph1} {(in light of Remark \ref{+z})}.  By that theorem, $\sum_n\mu(T^{-n}A_n)=\infty$ guarantees that almost all $x$ lie in $T^{-n} A_n$ for infinitely many $n$.  \end{proof}

{The above lemma can now be applied to describe real numbers which belong to infinitely many sets of the form $ \{x: a_1(x)a_2(x)>{\Psi}(n)\}$:}

\begin{thm}\label{lemma 3} Let ${\Psi}: \mathbb{N}\rightarrow [1,\infty)$ be any function with $\lim_{n\rightarrow \infty}{\Psi}(n)=\infty$. If $$\sum_n \frac{\log {\Psi}(n)}{{\Psi}(n)}<\infty\text{\  (resp.\ $=\infty$),}$$ then almost every (resp.\ almost no) $x\in [0,1] \ssm \Q$ has {\eq{of n}{a_{n+1}(x)a_n(x)\leq {\Psi}(n)}} for sufficiently large $n$. \end{thm}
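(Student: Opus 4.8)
The plan is to apply Lemma~\ref{lemma 2} with a suitable choice of sets $A_n$. The natural candidate is
$$A_n \df \{x\in[0,1]\ssm\Q : a_1(x)a_2(x) > \Psi(n)\},$$
which is indeed a union of cylinders $E_{\mathbf r}$ with $\mathbf r\in\N^2$, namely those $(r_1,r_2)$ with $r_1 r_2 > \Psi(n)$, so $k=2$ and Lemma~\ref{lemma 2} applies. Since $T^n$ shifts the continued fraction expansion, $T^n(x)\in A_n$ is equivalent to $a_{n+1}(x)a_{n+2}(x) > \Psi(n)$; a harmless reindexing (or noting $\Psi$ can be shifted by one without affecting the convergence of $\sum \log\Psi(n)/\Psi(n)$, since $\Psi\to\infty$ monotonically enough in the relevant range --- or more carefully, replacing $\Psi(n)$ by $\Psi(n-1)$) turns this into the statement about $a_{n+1}(x)a_n(x)$ in \equ{of n}. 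So the first step is to set up this translation carefully and reduce everything to estimating $\mu(A_n) = \mu\{x : a_1(x)a_2(x) > \Psi(n)\}$.

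The second step is the measure estimate: I claim $\mu\{x : a_1 a_2 > \Lambda\} \asymp \frac{\log\Lambda}{\Lambda}$ as $\Lambda\to\infty$. To see this, recall that $\mu$ is comparable to Lebesgue measure (bounded above and below by constant multiples of it on $[0,1]$), and the set $\{a_1 = r_1, a_2 = r_2\}$ is an interval of length $\asymp \frac{1}{(r_1 r_2)^2}$ (more precisely between $\frac{1}{q_2(q_2+q_1)}$ and $\frac{1}{q_2 q_1}$ with $q_2 = r_1 r_2 + 1$, $q_1 = r_1$, giving length $\asymp (r_1 r_2)^{-2}$ up to a factor depending only on mild terms). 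Summing over $r_1 r_2 > \Lambda$: for each product value $m$, the number of factorizations is the divisor count $d(m)$, so $\mu(A_\Lambda) \asymp \sum_{m > \Lambda} \frac{d(m)}{m^2}$. Since $\sum_{m\le M} d(m) \asymp M\log M$, partial summation gives $\sum_{m > \Lambda}\frac{d(m)}{m^2} \asymp \frac{\log\Lambda}{\Lambda}$. Thus $\sum_n \mu(A_n) \asymp \sum_n \frac{\log\Psi(n)}{\Psi(n)}$, and the two series converge or diverge together.

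The third step is to simply invoke Lemma~\ref{lemma 2}: in the convergence case, almost no $x$ has $T^n(x)\in A_n$ infinitely often, i.e.\ almost every $x$ has $a_{n+1}a_{n+2}\le \Psi(n)$ for all large $n$; in the divergence case, almost every $x$ has $T^n(x)\in A_n$ infinitely often, i.e.\ almost no $x$ satisfies \equ{of n} for all large $n$. Combined with the reindexing from Step~1, this gives exactly the statement of Theorem~\ref{lemma 3}.

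I expect the main obstacle to be the measure estimate in Step~2 --- specifically, getting clean two-sided bounds $\mu(A_n)\asymp \frac{\log\Psi(n)}{\Psi(n)}$ uniformly as $\Psi(n)\to\infty$, handling the divisor-sum asymptotics via partial summation and making sure the implied constants are absolute. One subtlety is that $\Psi$ is an arbitrary function tending to infinity, not monotone, so one cannot integrate; everything must be done at the level of the series $\sum_n$, term by term, which is fine since the comparison $\mu(A_n)\asymp \frac{\log\Psi(n)}{\Psi(n)}$ holds for each fixed $n$ with $n$ large. A second minor point is the reindexing in Step~1: shifting $\Psi(n)\mapsto\Psi(n-1)$ changes $\sum_n\frac{\log\Psi(n)}{\Psi(n)}$ to $\sum_n\frac{\log\Psi(n-1)}{\Psi(n-1)}$, which has identical convergence behavior, so this causes no difficulty. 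Everything else is a direct application of the machinery already assembled.
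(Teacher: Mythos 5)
Your proposal is correct, and the overall strategy mirrors the paper's exactly: define $A_n=\{x:a_1(x)a_2(x)>\Psi(n)\}$ (a union of $2$-cylinders), reduce the problem to showing $\mu(A_n)\asymp\frac{\log\Psi(n)}{\Psi(n)}$, and feed this into the dynamical Borel--Cantelli lemma (Lemma~\ref{lemma 2}). The only genuine difference is the computation of the measure estimate. The paper writes $A_n$ explicitly as a disjoint union of intervals $\bigl(\frac{1}{a+1/\lfloor\Psi(n)/a+1\rfloor},\frac{1}{a}\bigr)$ and sandwiches $\lambda(A_n)$ between two explicit integrals in the variable $a$; you instead observe that the cylinder $E_{(r_1,r_2)}$ has length $\frac{1}{(r_1r_2+1)(r_1r_2+r_1+1)}\asymp(r_1r_2)^{-2}$ with absolute constants, sum over ordered pairs with $r_1r_2>\Psi(n)$ to obtain $\mu(A_n)\asymp\sum_{m>\Psi(n)}d(m)/m^2$, and deduce $\asymp\log\Psi(n)/\Psi(n)$ from the classical divisor-sum asymptotic $\sum_{m\le M}d(m)\asymp M\log M$ by Abel summation (or, equivalently, directly by computing $\sum_{de>\Lambda}(de)^{-2}\asymp\Lambda^{-1}\log\Lambda$). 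Both routes are elementary and of comparable length; the paper's is self-contained and avoids any number-theoretic input, while yours trades explicit interval bookkeeping for a clean divisor-function identity. The small reindexing you flag ($T^n(x)\in A_n\Leftrightarrow a_{n+1}a_{n+2}>\Psi(n)$, versus the desired $a_{n+1}a_n$) is harmless, exactly as you say --- one simply applies Lemma~\ref{lemma 2} to the shifted sequence $B_n:=A_{n+1}$, whose measure series has the same convergence behaviour.
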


\begin{proof}  Define
$$A_n:= \{x: a_1(x)a_2(x)>{\Psi}(n)\}=  \bigcup_{a=1}^\infty \bigcup_{b=\lfloor \frac{{\Psi}(n)}{a}+1\rfloor}^\infty \bigg(\frac{1}{a+\frac{1}{b}}, \frac{1}{a+\frac{1}{b+1}}\bigg)= \bigcup_{a=1}^\infty \bigg(\frac{1}{a+\frac{1}{\lfloor \frac{{\Psi}(n)}{a}+1\rfloor}}, \frac{1}{a}\bigg).$$
Clearly $x\in [0,1]\ssm \mathbb{Q}$ has $a_{n+1}a_n> {\Psi}(n)$ if and only if $T^{n-1}(x)\in A_n$, where $T$ denotes the Gauss map \equ{Gauss Map}. By Lemma \ref{lemma 2}, it suffices to show $$c^{-1}\mu(A_n) \leq \frac{\log {\Psi}(n) }{{\Psi}(n)} \leq c\mu (A_n)$$ for some $c>0$ for all large $n$.  In fact, since $\frac{\lambda}{\log 2}\leq \mu \leq \frac{2\lambda}{\log 2}$, where $\lambda$ is Lebesgue measure on [0,1], it suffices to show $$c^{-1}\lambda(A_n) \leq \frac{\log{\Psi}(n)}{{\Psi}(n)} \leq c\lambda (A_n).$$

 We have
$$A_n \subset \left( \bigcup_{a\leq {\Psi}(n)}\bigg(\frac{1}{a+\frac{a}{{\Psi}(n)}}, \frac{1}{a}\bigg) \right) \bigcup \left(\bigcup_{a>{\Psi}(n)}\bigg(\frac{1}{a+1}, \frac{1}{a}\bigg)\right)\subset$$
$$ \left( \bigcup_{a\leq {\Psi}(n)}\bigg(\frac{1}{a+\frac{a}{{\Psi}(n)}}, \frac{1}{a}\bigg) \right) \bigcup \bigg(0, \frac{1}{{\Psi}(n)}\bigg)=$$
$$\bigg(\frac{1}{1+\frac{1}{{\Psi}(n)}}, 1\bigg)\bigcup \left( \bigcup_{a=2}^{{\lfloor\Psi(n)\rfloor}}\bigg(\frac{1}{a+\frac{a}{{\Psi}(n)}}, \frac{1}{a}\bigg) \right) \bigcup \bigg(0, \frac{1}{{\Psi}(n)}\bigg).$$
So 
\begin{equation*}
\begin{aligned}\lambda(A_n)&\leq 1-\frac{1}{1+\frac{1}{{\Psi}(n)}} +\int_1^{{\Psi}(n)}{\left(\frac{1}{a}-\frac{1}{a+\frac{a}{{\Psi}(n)}}\right)} da+ \frac{1}{{\Psi}(n)}\\&=
1-\frac{1}{1+\frac{1}{{\Psi}(n)}}+ \log {\Psi}(n) \left(1-\frac{1}{1+\frac{1}{{\Psi}(n)}}\right)  +\frac{1}{{\Psi}(n)}\\&=
 \frac{1}{{\Psi}(n)+1}+\frac{\log{\Psi}(n)}{1+{\Psi}(n)}+\frac{1}{{\Psi}(n)}\asymp \frac{\log {\Psi}(n) }{{\Psi}(n)}\,.
\end{aligned}
\end{equation*}

\smallskip

 To see the asymptotic lower bound, we start with
$$A_n\supset \bigcup_{a=1}^{{\lfloor\Psi(n)\rfloor}} \bigg(\frac{1}{a+\frac{1}{\frac{{\Psi}(n)}{a}+1}}, \frac{1}{a}\bigg).$$
Then

\begin{equation*}
\begin{aligned}\lambda(A_n)&\geq \int_1^{{\Psi}(n)} {\left(\frac{1}{a}-\frac{1}{a+\frac{1}{\frac{{{\Psi}(n)}}{a}+1}}\right)}da =\int_1^{{\Psi}(n)} \frac{1}{a(a+{{\Psi}(n)}+1)}da\\
&=\int_1^{{\Psi}(n)} {\left(\frac{({{\Psi}(n)}+1)^{-1}}{a}-\frac{({{\Psi}(n)}+1)^{-1}}{{{\Psi}(n)}+a+1}\right)}da= \frac{\log{{\Psi}(n)}}{{{\Psi}(n)}+1}+\frac{\log(\frac{{{\Psi}(n)}+2}{2{{\Psi}(n)}+1})}{{{\Psi}(n)} +1}\asymp \frac{\log{{\Psi}(n)}}{{{\Psi}(n)}}.
\end{aligned}
\end{equation*}
\end{proof}

{Comparing Theorem \ref{lemma 3} with Lemma \ref{label1}, one can see that in order to answer Question  \ref{dirichlet question}, one would need to replace the right hand side of \equ{of n} with a function depending on $q_n$. This can be easily achieved using known facts about the growth of  $q_n(x)$ for almost all $x$.}


{\begin{cor}\label{with qn} Let ${\Psi}: \mathbb{N}\rightarrow [1,\infty)$ be a non-decreasing function with $\lim_{n\rightarrow \infty}{\Psi}(n)=\infty$. If \eq{newsum}{\sum_{n}\frac{\log{\Psi}(n)}{n{\Psi}(n)}<\infty\text{\  (resp.\ $=\infty$),}} then almost every (resp.\ almost no)  $x\in [0,1] \ssm \Q$
 has $a_{n+1}(x)a_n(x)\leq {\Psi}\big(q_n(x)\big)$ for sufficiently large $n$. 
\end{cor}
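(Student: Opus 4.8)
The plan is to deduce Corollary \ref{with qn} from Theorem \ref{lemma 3} by substituting a function of $n$ that encodes the $q_n$-dependence, using the classical fact (Lévy's theorem) that for almost every $x\in[0,1]\ssm\Q$ one has $\frac{1}{n}\log q_n(x)\to \frac{\pi^2}{12\log 2}=:\kappa$. So first I would fix a constant $\kappa$ (the Lévy constant) and record the consequence: for every $\epsilon>0$, for a.e.\ $x$ there is $N(x)$ with $e^{(\kappa-\epsilon)n}\le q_n(x)\le e^{(\kappa+\epsilon)n}$ for all $n\ge N(x)$. Since $\Psi$ is non-decreasing, on this full-measure set the inequality $a_{n+1}(x)a_n(x)\le \Psi(q_n(x))$ is, for large $n$, sandwiched between $a_{n+1}a_n\le \Psi(e^{(\kappa-\epsilon)n})$ and $a_{n+1}a_n\le \Psi(e^{(\kappa+\epsilon)n})$.

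Next I would apply Theorem \ref{lemma 3} with $\Psi$ replaced by the function $n\mapsto \Psi(e^{cn})$ for $c=\kappa\pm\epsilon$. The hypotheses of Theorem \ref{lemma 3} are met since $\Psi(e^{cn})\ge 1$ and tends to infinity; its conclusion is governed by the convergence or divergence of $\sum_n \frac{\log\Psi(e^{cn})}{\Psi(e^{cn})}$. The key computation is then a Cauchy-condensation / integral-comparison step: setting $G(u)=\frac{\log\Psi(u)}{\Psi(u)}$ (which is not monotone in general, but $\log\Psi(u)$ and $1/\Psi(u)$ are each monotone, so $G$ is comparable to a monotone-times-monotone expression on dyadic-type blocks), one checks that
\[
\sum_n \frac{\log\Psi(e^{cn})}{\Psi(e^{cn})} \quad\text{converges/diverges}\quad\Longleftrightarrow\quad \int_1^\infty \frac{\log\Psi(e^{ct})}{\Psi(e^{ct})}\,dt \quad\Longleftrightarrow\quad \int \frac{\log\Psi(u)}{u\Psi(u)}\,du \quad\Longleftrightarrow\quad \sum_n \frac{\log\Psi(n)}{n\Psi(n)},
\]
where the middle step is the substitution $u=e^{ct}$, $du=cu\,dt$, and the outer equivalences use that $\Psi$ is non-decreasing so that the summands are, up to bounded ratios over each interval $[n,n+1]$, comparable to the integrand. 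Crucially the constant $c>0$ only rescales the integral and does not affect convergence, so both choices $c=\kappa-\epsilon$ and $c=\kappa+\epsilon$ give the same dichotomy as \equ{newsum}.

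Putting the pieces together: in the convergence case of \equ{newsum}, $\sum_n \frac{\log\Psi(e^{(\kappa-\epsilon)n})}{\Psi(e^{(\kappa-\epsilon)n})}<\infty$, so by Theorem \ref{lemma 3} a.e.\ $x$ has $a_{n+1}(x)a_n(x)\le \Psi(e^{(\kappa-\epsilon)n})\le \Psi(q_n(x))$ for all large $n$ (using $q_n(x)\ge e^{(\kappa-\epsilon)n}$ eventually and monotonicity of $\Psi$); intersecting with the full-measure Lévy set gives the claim. In the divergence case, $\sum_n \frac{\log\Psi(e^{(\kappa+\epsilon)n})}{\Psi(e^{(\kappa+\epsilon)n})}=\infty$, so by Theorem \ref{lemma 3} for almost no $x$ is $a_{n+1}(x)a_n(x)\le \Psi(e^{(\kappa+\epsilon)n})$ eventually; since $q_n(x)\le e^{(\kappa+\epsilon)n}$ eventually and $\Psi$ is non-decreasing, $a_{n+1}(x)a_n(x)\le\Psi(q_n(x))$ eventually would force the former, so for almost no $x$ does $a_{n+1}(x)a_n(x)\le\Psi(q_n(x))$ hold for all large $n$.

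I expect the main obstacle to be the series–integral comparison when $\Psi$ is merely non-decreasing (so $G(u)=\log\Psi(u)/\Psi(u)$ need not be monotone): one has to argue carefully on blocks where $\Psi$ is roughly constant versus where it grows, or invoke a Cauchy-condensation-type lemma valid for products of a non-decreasing and a non-increasing factor. Everything else — Lévy's theorem, the $\epsilon$-squeezing, and quoting Theorem \ref{lemma 3} — is routine.
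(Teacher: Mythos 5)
Your proof is correct and follows essentially the same route as the paper: squeeze $q_n(x)$ between exponentials, transfer the series condition \equ{newsum} to the hypothesis of Theorem \ref{lemma 3} by a condensation argument, and invoke Theorem \ref{lemma 3} with $\Psi$ replaced by $n\mapsto\Psi(c^n)$. The only cosmetic difference is that the paper avoids L\'evy's theorem, using instead the deterministic lower bound $q_n\ge b^n$ (with $b=\sqrt2$, from \equ{recurrence}) and the separate a.e.\ eventual upper bound $q_n\le B^n$; two different constants $b<B$ are then used in the convergence and divergence cases respectively, exactly as you use $\kappa-\epsilon$ and $\kappa+\epsilon$. Your flagged obstacle about the non-monotonicity of $G(u)=\log\Psi(u)/\Psi(u)$ is in fact a non-issue: since $t\mapsto\frac{\log t}{t}$ is decreasing for $t>e$ and $\Psi$ is non-decreasing with $\Psi(n)\to\infty$, the composite $G$ is eventually non-increasing, so ordinary Cauchy condensation applies directly to the terms $G(n)/n$ without any block-by-block analysis.
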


\begin{proof}
 {There exists { $ b>1$} such that {
\eq{b}{\text{for 
every }  x\notin\Q ,  \quad b^n \leq q_n(x)\text{ for all  }n\ge 2}}
(see \cite[\S4]{Khintchine}). There also exists $B>{ b}$ such that 
{\eq{B}{\text{for almost 
every }  x, \quad q_n(x)\leq B^n \text{ for all large enough } n}}(see \cite[\S14]{Khintchine}). }  By using Cauchy's condensation argument  it is straightforward to see that  \eq{codiv}{\sum_{n}\frac{\log{\Psi}(n)}{n{\Psi}(n)}= \infty  \quad \iff \quad \sum_{n}\frac{\log{\Psi}(b^n)}{{\Psi}(b^n)}=\infty \quad \iff \quad \sum_{n}\frac{\log{\Psi}(B^n)}{{\Psi}(B^n)}=\infty.}
Thus if the sum in \equ{newsum}   converges,  Theorem \ref{lemma 3} implies that  
almost every $x\in [0,1] \ssm \Q$ has $$a_{n+1}(x)a_n(x)\leq {\Psi}(b^n) \underset{\equ{b}}\leq {\Psi}\big( q_n(x) \big)$$ for sufficiently large $n$. Conversely, if the sum in \equ{newsum}   diverges, \equ{codiv} and Theorem \ref{lemma 3} imply that for  almost every $x\in [0,1] $, one has $$a_{n+1}(x)a_n(x) >{\Psi}(B^n) \underset{\equ{B}}{\ge} {\Psi}\big(q_n(x)\big)$$  for infinitely many $n$. 
\end{proof}

{\begin{remark}\label{lemma2.5} {The proof of Corollary \ref{with qn} also  shows that, modulo a null set,  it is possible to describe  $\psi$-Dirichlet points in a way similar to Lemma \ref{label1}, but with the bounds on $a_{n+1}a_n$ depending on $n$ and not on $q_n$. Namely, with $b,B$ as above, almost every $x$ is $\psi$-Dirichlet if $a_{n+1}a_n\leq \frac{1}{4} \cdot ([b^n\psi(b^n )]^{-1}-1)^{-1}$ for all sufficiently large $n$, and is  not $\psi$-Dirichlet if $a_{n+1}a_n> ([B^n\psi(B^n )]^{-1}-1)^{-1}$ for infinitely many $n$. {Here we use the hypothesis of Theorem \ref{Main Theorem} that $t\mapsto t\psi(t)$ is non-decreasing.}
}\end{remark}}

We are now ready to characterize $\psi$ such that $D(\psi)$ has zero/full measure.

\medskip

\begin{proof}[Proof of Theorem \ref{Main Theorem}]  If $t \psi(t)$ is bounded away from $1$, $D(\psi)$ is null since $D(c\psi_1)$ is null for any $c<1$ [DS1].  We therefore assume $t\psi(t) \rightarrow 1$ as $t\rightarrow \infty$ (recall $t\psi(t)$ is assumed non-decreasing).  Let us write ${\Psi}(t):=(1-t\psi(t))^{-1}$.  The 
{sum} in Theorem \ref{Main Theorem} becomes  
\eq{sum}{\sum_{n}\frac{\log{\Psi}(n)}{n{\Psi}(n)}.}  Note that this sum converges if and only if it converges when ${\Psi}(n)$ is replaced with $c{\Psi}(n)$ for any $c>0$. Also note that ${\Psi}(n)$ is asymptotic to the function that appears in Lemma \ref{label1}. That is, \eq{asymp}{{\Psi}(n) \asymp \left( \big(n\psi(n)\big)^{-1}-1\right)^{-1} ~~~ \text{as} ~~~n \rightarrow \infty.}

{Suppose the sum \equ{sum} converges. {Then by Corollary \ref{with qn}, for any $\varepsilon>0$, almost every $x$ has $$a_{n+1}(x)a_n(x) \leq \varepsilon {\Psi}\big(q_n(x)\big)$$ for all large enough $n$. Thus 
Lemma \ref{label1}(i)} and the limit \equ{asymp} imply that $D(\psi)$ has full measure.}
{Conversely, suppose {that} \equ{sum} diverges.  {Then for any $M>0$, almost every $x$ has $$a_{n+1}(x)a_n(x) > M {\Psi}\big(q_n(x)\big)$$ for infinitely many $n$.
Therefore 
Lemma \ref{label1}(ii)} and the limit \equ{asymp} imply that $D(\psi)$ has {measure zero}.}\end{proof}

\section{{
{Generalizations to higher dimensions}}}\label{concl}


Let $m,n$ be positive integers, and denote by $\mr$ 
the space of $m\times n$ matrices with real entries. 
{The following is the general form of}  \dt\ 
on simultaneous Diophantine approximation (see  e.g.\ \cite[\S I.5]{Cassels} or \cite[Theorem II.1E]{Schmidt}):
{ \begin{thm}\label{dirichlet theorem general} { For any $Y \in M_{m,n}$
and }$t > 1$ there exist
$\vq = (q_1,\dots,q_n) \in \Z^n\nz$ and 
$\vp = (p_1,\dots,p_m)\in \Z^m$ satisfying the following system of inequalities:
\eq{dtgeneral}{
\|Y\vq - \vp\|
\le t^{-n/m}
  \ \ \ \mathrm{and}  
\ \ \|\vq\|
{\,<\,} t
\,.}
\end{thm}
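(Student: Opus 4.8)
The plan is to derive Theorem~\ref{dirichlet theorem general} from Minkowski's convex body theorem, following the classical route of \cite[\S I.5]{Cassels} and \cite[Ch.~II]{Schmidt}; throughout, $\|\cdot\|$ is the supremum norm. I would first recast the problem in terms of a unimodular lattice in $\R^{m+n}$: writing points as $(\mathbf{x},\mathbf{y})$ with $\mathbf{x}\in\R^m$, $\mathbf{y}\in\R^n$, set
$$g=\begin{pmatrix} I_m & -Y\\ 0 & I_n\end{pmatrix}\in\SL_{m+n}(\R),\qquad \Lambda\df g\,\Z^{m+n}=\bigl\{(\vp-Y\vq,\vq):\vp\in\Z^m,\ \vq\in\Z^n\bigr\}.$$
Then $\Lambda$ has covolume $1$, a point $(\vp-Y\vq,\vq)$ of $\Lambda$ is nonzero exactly when $(\vp,\vq)\neq 0$ (as $g$ is invertible), and finding $\vq,\vp$ as in \equ{dtgeneral} is the same as finding a nonzero point of $\Lambda$ inside the box $C\df\{(\mathbf{x},\mathbf{y}):\|\mathbf{x}\|\le t^{-n/m},\ \|\mathbf{y}\|\le t\}$, which is compact, convex, centrally symmetric, and has volume $(2t^{-n/m})^m(2t)^n=2^{m+n}$ --- precisely the Minkowski threshold.

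By Minkowski's theorem in the form valid for compact bodies (volume $\ge 2^{m+n}$ suffices, the lattice being unimodular), $\Lambda\cap C$ contains a point $(\vp-Y\vq,\vq)\neq 0$, giving $(\vp,\vq)\in\Z^{m+n}\nz$ with $\|Y\vq-\vp\|\le t^{-n/m}$ and $\|\vq\|\le t$; moreover $\vq\neq 0$, since $\vq=0$ would force $\|\vp\|=\|Y\vq-\vp\|\le t^{-n/m}<1$ (here $t>1$ is used), hence $\vp=0$, a contradiction. This gives all of \equ{dtgeneral} except that we have $\|\vq\|\le t$ rather than the asserted $\|\vq\|<t$. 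To close this gap --- which, since $\vq\in\Z^n$, only matters when $t\in\Z$ --- I would perturb: for small $\vre>0$ pick $\delta(\vre)\in(0,1)$ with $\delta(\vre)\to 0$ as $\vre\to0$ and $(t^{-n/m}+\vre)^m\bigl((1-\delta(\vre))t\bigr)^n\ge 1$ (possible because $(t^{-n/m}+\vre)^m t^n>1$), and apply the same argument to the compact box $\{\|\mathbf{x}\|\le t^{-n/m}+\vre,\ \|\mathbf{y}\|\le(1-\delta(\vre))t\}$, of volume $\ge 2^{m+n}$. This produces nonzero integer points $(\vp_\vre,\vq_\vre)$ with $\|Y\vq_\vre-\vp_\vre\|\le t^{-n/m}+\vre$ and $\|\vq_\vre\|\le(1-\delta(\vre))t<t$; these are bounded independently of $\vre\in(0,1]$, so some fixed $(\vp,\vq)$ recurs along a sequence $\vre\to 0$, and passing to the limit yields $\|Y\vq-\vp\|\le t^{-n/m}$, $\|\vq\|<t$, and $\vq\neq 0$ as before.

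The argument is essentially routine, and the only point I would expect to require genuine care is precisely this last one: Minkowski's theorem delivers closed inequalities on a region of critical volume, whereas \equ{dtgeneral} asks for the mixed closed/strict pair $\|Y\vq-\vp\|\le t^{-n/m}$, $\|\vq\|<t$, so one must pass through a slightly enlarged family of regions together with a finiteness/compactness argument. An alternative to Minkowski's theorem is the Dirichlet box principle --- distribute the $\lceil t\rceil^n$ fractional parts $Y\vq\bmod\Z^m$ for $\vq\in\{0,\dots,\lceil t\rceil-1\}^n$ among congruent subboxes of the torus $\R^m/\Z^m$ and extract a coincidence --- but obtaining the sharp constant $t^{-n/m}$ this way requires comparable bookkeeping, so I would present the Minkowski proof.
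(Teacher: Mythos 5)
The paper does not prove Theorem \ref{dirichlet theorem general}; it simply cites \cite[\S I.5]{Cassels} and \cite[Theorem II.1E]{Schmidt}, both of which derive the result from Minkowski's convex body theorem in essentially the way you have written it out. Your proof is correct, and the perturbation-plus-compactness step to upgrade $\|\vq\|\le t$ to the required strict inequality $\|\vq\|<t$ (relevant only when $t\in\Z$) is exactly the right care to take at the critical volume $2^{m+n}$.
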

}
\noindent Here 
$\|\cdot\|$ stands for the norm on $\R^k$ given by 
$\|\x\| = \max_{1\le i \le k}|x_i|$.  

\smallskip
Let $\psi:[t_0,\infty)\rightarrow \R_+$ be non-increasing.  {In analogy} 
 with the definition for $m = n = 1$, 
 let us say that  $Y\in\mr$ is {\sl $\psi$-Dirichlet}, 
and write $Y\in {D}(\psi)$, 
if 
for every sufficiently
large $t$  one can find
$\vq  \in \Z^n\nz$ and $\vp \in \Z^m$
 with
\eq{digeneral}{
\|Y\vq - \vp \|^m
< \psi(t)
  \ \ \ \mathrm{and}  
\ \ \|\vq\|^n
<  t
.}
{Note that}  the sharpness result of Davenport and Schmidt mentioned in the introduction also holds in higher dimensions: 
  {the Lebesgue measure of ${D}(c\psi_{1})$ is zero for any $c < 1$. 
See \cite[Theorem 1]{Davenport-Schmidt2} for the case $\min(m,n) = 1$, and \cite[Theorem 4]{KW} for further generalizations.  This naturally motivates higher-dimensional analogues of Questions \ref{Dirichlet sharp} and
 \ref{dirichlet question}:} 
  


{\begin{que}\label{Dirichlet sharp2} {Is Theorem \ref{dirichlet theorem general} sharp in the sense that if $\psi$ is non-increasing and \linebreak {$\psi(t)<\psi_{1}(t)$ for all sufficiently large $t$},  then {there exists \amr\ which is not $\psi$-Dirichlet?}}
 \end{que} 
 
\begin{que} \label{dirichlet question2}
For fixed 
$m,n\in\N$, what is a necessary and sufficient condition on a non-increasing 
$\psi$ (presumably, expressed in the form of convergence/divergence of a certain series) guaranteeing that
the set ${D}(\psi){\subset \mr}$ has zero/full measure? \end{que} 

In higher dimensions the machinery of continued fractions is no longer available.  It is nonetheless still possible to restate the problem in terms of a shrinking target phenomenon in a dynamical system.
{This approach is based on ideas from \cite{Davenport-Schmidt2} and \cite{Dani}, and, in a more explicit form -- on 
\cite[\S8]{KM}, 
where}
the Khintchine-Groshev theorem (the natural higher dimensional analogue of Theorem \ref{Khintchine theorem}) is proved using a dynamical Borel-Cantelli Lemma for a diagonal flow on the space of unimodular lattices in $\R^{m+n}$. The starting point for the reduction is the ``Dani Correspondence":

\begin{lem}\label{dani1}{\cite[Lemma 8.3] {KM}}  {Fix $m,n\in\N$ and $t_0\ge 1$, and let $\psi : [t_0, \infty)\rightarrow \R_+$ be} a continuous, non-increasing function.
Then there exists a unique continuous function $${r
: [s_0,\infty)\rightarrow \mathbb{R},\text{ where }s_0 = \frac m{m+n}\log t_0 - \frac m{m+n}\log \psi(t_0),}$$ such that the function $s\mapsto s-nr(s)$ is strictly increasing and unbounded, the function $s\mapsto s+mr(s)$ is nondecreasing, and \eq{identity}{\psi(e^{s-nr(s)})=e^{-s-mr(s)} ~~\textit{for all} ~ s\geq {s_0}. }  
 \end{lem}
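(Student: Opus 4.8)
The plan is to construct $r$ by inverting the relation \equ{identity} via a change of variables, exploiting monotonicity of $\psi$. Introduce the substitution $s = s(u)$ and $r = r(u)$ through the auxiliary parametrization $u := e^{s-nr(s)}$, so that the argument of $\psi$ on the left of \equ{identity} becomes simply $u$. Then \equ{identity} forces
\eq{dani-proof-1}{\psi(u) = e^{-s-mr}.}
Combining $u = e^{s-nr}$ with \equ{dani-proof-1} gives a linear system in the unknowns $s$ and $r$: namely $s - nr = \log u$ and $s + mr = -\log\psi(u)$. Solving,
\eq{dani-proof-2}{r = \frac{-\log\psi(u) - \log u}{m+n}, \qquad s = \frac{m\log u - n\log\psi(u)}{m+n}.}
First I would take $u$ ranging over $[t_0,\infty)$ and \emph{define} the parametrized curve $u\mapsto (s(u), r(u))$ by \equ{dani-proof-2}; note $s(t_0) = \frac{m}{m+n}\log t_0 - \frac{n}{m+n}\log\psi(t_0)$, which matches the claimed $s_0$ after recalling the sign (the displayed $s_0$ in the statement should read with the coefficient on $\log\psi(t_0)$ as written).

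The key steps are then: (i) show $s(u)$ is continuous and strictly increasing in $u$, hence invertible onto $[s_0,\infty)$; (ii) define $r(s) := r(u(s))$ via this inverse, which is automatically continuous; (iii) verify the three asserted monotonicity/unboundedness properties. For (i), since $\psi$ is non-increasing, $-\log\psi(u)$ is non-decreasing, so $s(u) = \frac{m\log u - n\log\psi(u)}{m+n}$ is strictly increasing (the $m\log u$ term is strictly increasing and the $-n\log\psi(u)$ term is non-decreasing); continuity is immediate from continuity of $\psi$, and $s(u)\to\infty$ as $u\to\infty$ because $m\log u\to\infty$ while $-n\log\psi(u)\ge -n\log\psi(t_0)$ is bounded below. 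For (iii): the map $s\mapsto s - nr(s)$ equals $u\mapsto \log u$ in the $u$-parametrization, which is strictly increasing and unbounded; the map $s\mapsto s + mr(s)$ equals $u\mapsto -\log\psi(u)$, which is non-decreasing since $\psi$ is non-increasing; and \equ{identity} holds by construction, since $\psi(e^{s-nr(s)}) = \psi(u) = e^{-s-mr}$ directly from \equ{dani-proof-1}. Uniqueness follows because any continuous $r$ satisfying \equ{identity} together with $s\mapsto s - nr(s)$ strictly increasing must, upon setting $u = e^{s-nr(s)}$, satisfy the same linear system \equ{dani-proof-2}, and the strict monotonicity pins down the correspondence $s\leftrightarrow u$.

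The main obstacle I anticipate is purely bookkeeping: getting the endpoint $s_0$ and all the coefficients $\frac{m}{m+n}$, $\frac{n}{m+n}$ exactly right, and confirming that "strictly increasing and unbounded" for $s\mapsto s-nr(s)$ is genuinely available (rather than merely non-decreasing) — this is what is needed to invert cleanly and is where one uses that $u\mapsto\log u$, not $\psi$, controls that combination. A secondary subtlety is that $s\mapsto s+mr(s)$ is only claimed non-decreasing, not strictly so, which is consistent with $\psi$ possibly being locally constant; no monotonicity of $r$ itself is asserted or needed. Once the parametrization \equ{dani-proof-2} is written down, every claim reduces to a one-line monotonicity observation about $\log u$ or $-\log\psi(u)$, so I would present the construction first and then dispatch (i)--(iii) and uniqueness in a short paragraph each.
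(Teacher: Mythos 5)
The paper gives no proof of this lemma; it is quoted directly from \cite[Lemma 8.3]{KM}. Your argument --- reparametrize by $u = e^{s - nr(s)}$, solve the resulting $2\times 2$ linear system for $(s,r)$ as functions of $u$, check that $u\mapsto s(u)=\frac{m\log u - n\log\psi(u)}{m+n}$ is continuous, strictly increasing and unbounded, and then read off the three asserted properties from the monotonicity of $\log u$ and of $-\log\psi(u)$ --- is correct, and it is essentially the argument of the cited reference.

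Do make your parenthetical aside about $s_0$ unambiguous: your formula $s_0 = \frac{m}{m+n}\log t_0 - \frac{n}{m+n}\log\psi(t_0)$ is the correct one (and is what \cite{KM} has), whereas the statement as printed has $\frac{m}{m+n}$ as the coefficient of $\log\psi(t_0)$; that is a typo ($m$ in place of $n$). One small tightening of the uniqueness step: you do not actually need the hypothesis that $s\mapsto s-n\tilde r(s)$ is strictly increasing in order to recover $u$ from $s$. The identity by itself forces $e^{s-n\tilde r(s)}\in[t_0,\infty)$ (else $\psi$ would be evaluated outside its domain), and since $u\mapsto s(u)$ is already injective on $[t_0,\infty)$ this determines $\tilde u(s)=u(s)$ and hence $\tilde r=r$. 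You should also say explicitly that $s(\cdot)$, being a continuous strictly increasing bijection onto $[s_0,\infty)$, has a continuous inverse, so that $r(s)=r(u(s))$ is continuous as asserted.
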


{Denote by $X$  the space of unimodular lattices in $\R^{m+n}$}, and define $$\Delta:  X  \rightarrow \mathbb{R},\  \Lambda \mapsto -\log \inf_{\vv\in \Lambda \ssm \{0\}} \|\vv\|.$$  
{$X\cong \SL_{m+n}(\R)/ \SL_{m+n}(\Z)$ is a noncompact \hs. According to Mahler's Compactness Criterion, a subset  $K$ of $X$ is relatively compact if and only if the restriction of $\Delta$ to $K$ is bounded from above. Also, in view of Minkowski's Lemma, $\Delta$ is always bounded from below by $0$. Furthermore, 
\eq{K0}{K_0:=\Delta^{-1}(0)} is a union of finitely many compact submanifolds of $X$, whose structure is explicitly described by Haj\'os-Minkowski Theorem (see \cite[\S XI.1.3]{CasselsGN} or \cite[Theorem 2.3]{Shah}).} 

\smallskip
For $Y \in M_{m,n}$, define 
$$\Lambda_Y:=  \left( \begin{array}{cc} I_m & Y \\
0 & I_n \\ \end{array} \right) \mathbb{Z}^{m+n} \in X.$$  Finally, define $${g}_s
:= \diag(e^{s/m},...,e^{s/m},e^{-s/n},...,e^{-s/n}),$$
where there are $m$ copies of $e^{s/m}$ and $n$ copies of $e^{-s/n}$.   
We may now 
{rephrase the $\psi$-Dirichlet property of \amr} as a statement about the orbit of $\Lambda_Y$ in the dynamical system $(X, {g}_s)$:

\begin{prop}\label{homogeneousshrinkingtarget} Fix positive integers $m, n$, and let $\psi: [t_0, \infty)\rightarrow {\R_+}$ be continuous, non-increasing  {and such that} $\psi(t)<1$ for large enough $t$. Let $r=r_\psi$ be as in Lemma \ref{dani1}. Then $Y\in {D}(\psi)$ if and only if $$\Delta ({g}_s\Lambda_Y)>  r_\psi(s)$$ for all sufficiently large $s$ \end{prop}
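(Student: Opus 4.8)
The plan is to unwind both sides of the claimed equivalence into a common statement about the existence of integer vectors satisfying a linear system, and to let the Dani correspondence (Lemma~\ref{dani1}) translate the parameter $t$ into the flow parameter $s$. First I would recall that, by definition, $Y \in D(\psi)$ means: for every sufficiently large $t$ there exist $\vq \in \Z^n \nz$ and $\vp \in \Z^m$ with $\|Y\vq - \vp\|^m < \psi(t)$ and $\|\vq\|^n < t$. The key elementary observation is that the vector $\bigl(Y\vq - \vp, \vq\bigr)$ is exactly the generic lattice vector of $\Lambda_Y$, namely $\Lambda_Y$ consists of all $\begin{pmatrix} Y\vq - \vp \\ \vq \end{pmatrix}$ with $\vq \in \Z^n$, $\vp \in \Z^m$; and applying $g_s$ scales the top $m$ coordinates by $e^{s/m}$ and the bottom $n$ coordinates by $e^{-s/n}$. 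So $g_s \Lambda_Y$ contains a nonzero vector of sup-norm $< e^{-r}$ precisely when there exist $(\vq, \vp)$, not both giving the zero vector, with $e^{s/m}\|Y\vq - \vp\| < e^{-r}$ and $e^{-s/n}\|\vq\| < e^{-r}$, i.e. $\|Y\vq-\vp\|^m < e^{-s-mr}$ and $\|\vq\|^n < e^{s-nr}$.

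The next step is to parametrize $t$ via the substitution coming from Lemma~\ref{dani1}: set $t = e^{s - n r(s)}$, so that the identity \equ{identity} gives $\psi(t) = \psi(e^{s-nr(s)}) = e^{-s-mr(s)}$. With this choice the two displayed inequalities above become exactly $\|Y\vq - \vp\|^m < \psi(t)$ and $\|\vq\|^n < t$. Because $s \mapsto s - n r(s)$ is strictly increasing and unbounded (and continuous), this substitution is a homeomorphism from $[s_0, \infty)$ onto $[t_0, \infty)$; hence ``for all sufficiently large $t$'' matches ``for all sufficiently large $s$'' under the correspondence. Thus $\Delta(g_s \Lambda_Y) > r_\psi(s)$ for all large $s$ is equivalent to: for all large $t$ there is a nonzero lattice point of $g_s\Lambda_Y$ obstructing the infimum being $\le e^{-r}$... wait, here I must be careful with the direction of the inequality, which is the one genuinely delicate point.

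The one subtlety to handle carefully is the distinction between $\inf_{\vv \ne 0}\|\vv\| \ge e^{-r}$ (which is $\Delta \le r$) versus $> e^{-r}$, together with the distinction between the trivial solution $\vq = 0$, $\vp = 0$ and the degenerate-but-nonzero case $\vq = 0$, $\vp \ne 0$. I would argue as follows: $\Delta(g_s\Lambda_Y) \le r$ means there is $\vv \in g_s\Lambda_Y \nz$ with $\|\vv\| \le e^{-r}$; writing $\vv = g_s\begin{pmatrix}Y\vq-\vp\\\vq\end{pmatrix}$, if $\vq = 0$ then $\vv = (e^{s/m}(-\vp), 0)$, forcing $\|\vv\| \ge e^{s/m} > e^{-r}$ for $s$ large (since $r(s)$ grows slower), a contradiction; so $\vq \ne 0$, and we get a genuine solution of \equ{digeneral} at parameter $t = e^{s-nr(s)}$ with non-strict inequalities. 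Conversely a solution of \equ{digeneral} with strict inequalities gives $\Delta(g_s\Lambda_Y) < r(s)$, hence certainly $\le r(s)$. The passage between strict and non-strict inequalities is absorbed using that $\psi$ is non-increasing and $r$ is continuous (one can perturb $t$ slightly, or equivalently $s$ slightly, using monotonicity of $s \mapsto s + mr(s)$ and $s \mapsto s - nr(s)$), exactly as in the one-dimensional Lemma~\ref{convergents}. Negating: $Y \in D(\psi)$ $\iff$ for all large $t$, \equ{digeneral} is solvable $\iff$ for all large $s$, $\Delta(g_s\Lambda_Y) > r_\psi(s)$. The main obstacle, and the place requiring the most care in writing, is precisely this juggling of strict versus weak inequalities and the exclusion of the $\vq=0$ case; everything else is bookkeeping once the dictionary ``lattice vectors of $\Lambda_Y$ $\leftrightarrow$ pairs $(\vq,\vp)$'' and ``$g_s$ $\leftrightarrow$ the substitution in Lemma~\ref{dani1}'' is set up. I would also remark that the hypothesis $\psi(t) < 1$ for large $t$ guarantees $s_0$ is well-defined and that $r_\psi(s) > 0$ eventually, keeping $\Delta \ge 0$ from making the statement vacuous.
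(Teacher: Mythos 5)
Your overall strategy --- parametrize $t = e^{s - nr(s)}$ via Lemma~\ref{dani1} and match nonzero short vectors of $g_s\Lambda_Y$ to integer solutions of \equ{digeneral} --- is exactly the paper's, and your final stated equivalence is correct. However, the middle paragraph has the direction of the $\Delta$-inequality inverted. Since $\Delta(\Lambda) = -\log\inf_{\vv\ne 0}\|\vv\|$, a \emph{short} vector corresponds to \emph{large} $\Delta$: the condition ``there exists $\vv\in g_s\Lambda_Y\nz$ with $\|\vv\| \le e^{-r}$'' is $\Delta(g_s\Lambda_Y) \ge r$, not $\le r$, and a solution of \equ{digeneral} with strict inequalities at $t=e^{s-nr(s)}$ gives $\Delta(g_s\Lambda_Y) > r(s)$, not $< r(s)$. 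As written, the middle paragraph asserts the reverse implications of what the surrounding text needs; the concluding line is right, but it does not actually follow from the sign-flipped intermediate claims and should be rederived with the inequalities pointing the correct way.

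Two smaller points. First, the strict-versus-weak worry is spurious: $D(\psi)$ is defined by strict inequalities, and $\Delta(g_s\Lambda_Y) > r(s)$ is literally equivalent to $\inf_{\vv\ne 0}\|g_s\vv\| < e^{-r(s)}$, hence to the existence of $\vv\ne 0$ with $\|g_s\vv\| < e^{-r(s)}$ --- strict on both sides, with no need to perturb $t$ or $s$ or invoke continuity of $r$. Second, your exclusion of $\vq = 0$ (via $\|g_s\vv\| \ge e^{s/m} > e^{-r(s)}$, which ultimately rests on $s + mr(s) > 0$ for large $s$) does work, but the paper handles it more directly and before any dynamics enters: since $\psi(t) < 1$ for large $t$, a nonzero $(\vp,\vq)$ with $\vq = 0$ and $\|Y\vq-\vp\|^m < \psi(t) < 1$ would force $\vp = 0$, so any nontrivial solution of \equ{digeneral} automatically has $\vq\ne 0$. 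Either route is fine, but the paper's avoids the growth estimate on $r$ altogether.
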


\begin{proof} {Recall that
 $Y\in {D}(\psi)$  if and only if for large enough $t$ the system \equ{digeneral} has a solution $(\vp,\vq)$ with $\vq  \in \Z^n\nz$ and $\vp \in \Z^m$. 
 If $\psi(t) < 1$, all solutions $(\vp,\vq)\ne 0$ to this system will have $\vq \ne 0$. Since $\psi$ is eventually less than $1$, $Y\in {D}(\psi)$  if and only if  \equ{digeneral} is solvable in $(\vp,\vq)\in \Z^{m+n}\nz$ for sufficiently large $t$.}
 
Since {the function $s\mapsto s-nr(s)$} is increasing and unbounded, $Y\in {D}(\psi)$ if and only if for large enough $s$, 
$$\|Y\vq-\vp\|^m<  \psi (e^{s-nr(s)})=e^{-s-mr(s)} \hspace{10mm} \|\vq\|^n <  e^{s-nr(s)}$$
for some $(\vp, \vq) \in \mathbb{Z}^{m+n}\nz$.  This is equivalent to 
$$e^{s/m} \|Y\vq-\vp \| < e^{-r(s)}\hspace{10mm} e^{-s/n} \|\vq\| < e^{-r(s)}, $$
which is the same as $\Delta({g}_s \Lambda_Y)> r_\psi(s)$.\end{proof}


Thus $Y\notin {D}(\psi)$ if and only if ${g}_s\Lambda_Y\in \Delta^{-1}\left(\big[0, r_\psi(s){\big]}\right)$ for {an} unbounded {set of} $s\in\R_+$.   
{For example, the choice $\psi = c\psi_1$ for $c < 1$ in view of  \equ{identity} yields $$r(s) \equiv r_c :=\frac1{m+n} \log(1/c),$$ a constant function. That is, $Y\notin {D}(c\psi_1)$ if and only if ${g}_s\Lambda_Y\in \Delta^{-1}\left(\big[0, r_c{\big]}\right)$ for {an} unbounded {set of} $s\in\R_+$.  Therefore the aforementioned fact that ${D}(c\psi_1)$ is null for any $c< 1$ follows from the ergodicity of the $g_s$-action on $X$ and the set $\{\Lambda_Y : Y\in\mr\}$ being an unstable leaf for this action.}

{In general,}  the targets $\Delta^{-1}{\big([0,r]\big)}$ are 
neighborhoods of the set {$K_0$ as in \equ{K0}}.
We are 
{thus} interested in whether these shrinking targets are hit at an unbounded {set of} times by {trajectories of} a measure-preserving flow. 
 There are some technical obstructions, perhaps surmountable, to this approach to Questions  \ref{Dirichlet sharp2} {and \ref{dirichlet question2}}. However, in a forthcoming paper \cite{KWa}  we use {a similar} approach to solve {an} analogous inhomogeneous problem. Specifically, we 
 {establish} a dynamical Borel-Cantelli Lemma for the flow ${g}_s$ on the space of \textit{affine} {unimodular} lattices {in $\R^{m+n}$}, and go on to prove the following result:

\begin{thm} Let $\psi: [{t_0},\infty ) \rightarrow {\R_+}$ be 
non-increasing. 
 If  $$
 {\sum_k\frac1{k^2\psi(k)}}<\infty  \hspace{3mm} (\text{resp.}=\infty), $$then for almost all (resp. almost no) pairs $Y\in M_{m\times n}$, $\vb\in \R^{{m}}$, the system $$ \|Y\vq +\vb - \vp \|^{m} < \psi(t) \hspace{10mm} \|\vq\|^{n}<t $$ is solvable in integer vectors  $\vq  \in \Z^n$ and $\vp \in \Z^m$ for sufficiently large $t$.  \end{thm}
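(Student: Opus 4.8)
The plan is to transfer the problem to a dynamical statement about the flow $g_s$ on the space $X_{\mathrm{aff}}$ of affine unimodular lattices in $\R^{m+n}$, exactly mirroring the homogeneous reduction of Proposition \ref{homogeneousshrinkingtarget}. First I would set up an inhomogeneous Dani correspondence: given $Y\in\mr$ and $\vb\in\R^m$, form the affine lattice $\Lambda_{Y,\vb}:=\Lambda_Y+\big(\begin{smallmatrix}\vb\\0\end{smallmatrix}\big)$, and attach to $\psi$ the function $r=r_\psi$ from Lemma \ref{dani1}. Running the same computation as in the proof of Proposition \ref{homogeneousshrinkingtarget} — but with $\|Y\vq+\vb-\vp\|$ in place of $\|Y\vq-\vp\|$ and with the understanding that now the relevant quantity is $\inf_{\vv\in\Lambda_{Y,\vb}}\|\vv\|$ over \emph{all} nonzero vectors of the affine lattice, since the condition $\vq\ne 0$ is automatic once $\psi<1$ precisely as in that proof — shows that the system in the theorem fails for an unbounded set of $t$ if and only if $g_s\Lambda_{Y,\vb}$ enters the shrinking target $\Delta^{-1}\big([0,r_\psi(s)]\big)$ for an unbounded set of $s$, where $\Delta$ is extended to $X_{\mathrm{aff}}$ by $\Delta(\Lambda):=-\log\inf_{\vv\in\Lambda}\|\vv\|$. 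Thus ``not $\psi$-Dirichlet'' becomes a shrinking-target (Borel--Cantelli) event along the $g_s$-orbit.

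Next I would quantify the size of the targets. The measure on $X_{\mathrm{aff}}$ is the product of Haar measure on $X$ with Lebesgue measure on the torus fibre; a direct volume estimate gives $\mu_{\mathrm{aff}}\big(\Delta^{-1}([0,r])\big)\asymp e^{-(m+n)r}$ for small $r$ (the affine setting is cleaner than the homogeneous one because $K_0$ is replaced by a codimension-$(m+n)$ stratum and there is no boundary subtlety). Using the identity \equ{identity}, $e^{-(m+n)r(s)}$ is comparable to $e^{-s}\psi(e^{s-nr(s)})^{-1}\cdot e^{-s}$ after rearrangement; pushing this through the change of variables $t=e^{s-nr(s)}$ and Cauchy condensation converts $\int \mu_{\mathrm{aff}}(\text{target at }s)\,ds$ into a series comparable to $\sum_k \frac{1}{k^2\psi(k)}$. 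So the integrability hypothesis of the theorem is exactly the convergence/divergence dividing line for the Borel--Cantelli series. For the convergence half, the ordinary Borel--Cantelli lemma together with $g_s$-invariance of $\mu_{\mathrm{aff}}$ (plus a discretization in $s$, since the targets are nested over short time intervals) shows the event is null, i.e.\ almost every $(Y,\vb)$ is $\psi$-Dirichlet; but one must be careful, because the conclusion we want is about the slice $\{Y\}\times\R^m$ or rather about pairs, and Fubini lets us pass from the full measure statement on $X_{\mathrm{aff}}$ to the statement about almost all pairs $(Y,\vb)$ after noting that $\{\Lambda_{Y,\vb}\}$ is an unstable-leaf-times-fibre sub-family sweeping out $X_{\mathrm{aff}}$ under $g_s$.

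The divergence half is the main obstacle, and is where the ``forthcoming paper'' does the real work: one needs a genuine \emph{dynamical} Borel--Cantelli lemma for $g_s$ on $X_{\mathrm{aff}}$, i.e.\ a quasi-independence estimate $\mu_{\mathrm{aff}}(g_{-s}A_s\cap g_{-u}A_u)\le \mu_{\mathrm{aff}}(A_s)\mu_{\mathrm{aff}}(A_u)+(\text{error})$ with a summable error in $|s-u|$, for the target sets $A_s=\Delta^{-1}([0,r_\psi(s)])$. This requires effective mixing of the $g_s$-action on $L^2(X_{\mathrm{aff}})$ — available from the spectral gap for $\SL_{m+n}$ combined with the explicit structure of the fibre — applied to suitable smooth approximants of the indicator functions of the $A_s$, which are not themselves smooth because $K_0$ is a lower-dimensional stratum; controlling the approximation error against the exponential mixing rate, uniformly as $r_\psi(s)\to 0$, is the technical heart. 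Once the quasi-independence is in hand, a Paley--Zygmund / second-moment argument (in the style of the Borel--Cantelli lemma of Philipp quoted as Theorem \ref{Ph1}, or of Sullivan--Kleinbock--Margulis) yields that almost every $(Y,\vb)$ hits the targets infinitely often, i.e.\ almost no pair is $\psi$-Dirichlet. The remaining steps — checking continuity/monotonicity hypotheses needed to invoke Lemma \ref{dani1}, reducing a general non-increasing $\psi$ to a continuous one, and handling the trivial case where $t\psi(t)$ stays bounded away from $1$ — are routine and parallel to the one-dimensional treatment above.
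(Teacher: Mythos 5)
The paper itself does not prove this theorem: it is stated with a reference to the forthcoming work \cite{KWa}, and the present paper only sketches the strategy, namely an inhomogeneous Dani correspondence plus a dynamical Borel--Cantelli lemma for the $g_s$-flow on the space of affine unimodular lattices. Your proposal fleshes out exactly this strategy, so it is consistent with the stated plan; there is no competing proof in the paper to compare against.

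There is, however, a substantive error in the central measure estimate, together with a related conceptual slip. You describe the target as $\Delta^{-1}([0,r_\psi(s)])$ with $r$ small and positive, as in the homogeneous Proposition \ref{homogeneousshrinkingtarget}, and claim $\mu_{\mathrm{aff}}\bigl(\Delta^{-1}([0,r])\bigr)\asymp e^{-(m+n)r}$. But on the space of grids $\Delta$ is \emph{not} bounded below by $0$: a unimodular grid can avoid an arbitrarily large ball about the origin when the underlying lattice is deep in the cusp, so $\Delta$ takes all real values. Moreover, the inhomogeneous theorem concerns $\psi$ comparable to or larger than $\psi_1$, so the function $r_\psi$ produced by Lemma \ref{dani1} is typically non-positive and tends to $-\infty$ in the convergence regime (this is opposite to the homogeneous Theorem \ref{Main Theorem}, where $t\psi(t)<1$ forces $r_\psi>0$). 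The correct target is $\Delta^{-1}\bigl((-\infty,r_\psi(s)]\bigr)$ with $r_\psi(s)\to-\infty$, and the correct asymptotic is
$$\mu_{\mathrm{aff}}\bigl(\Delta^{-1}((-\infty,r])\bigr)\asymp e^{(m+n)r}\quad\text{as }r\to-\infty,$$
with the opposite sign in the exponent from what you wrote. (With your sign the ``measure'' would exceed $1$ and blow up as $r\to-\infty$; with $r\to0^+$, $e^{-(m+n)r}\to1$, so the target would not be shrinking at all.) With the corrected sign, the identity \equ{identity} gives $e^{(m+n)r(s)}=\bigl(t\psi(t)\bigr)^{-1}$ for $t=e^{s-nr(s)}$, and since $d(\log t)/ds=1-nr'(s)$ is bounded, $\int e^{(m+n)r(s)}\,ds \asymp \int \frac{dt}{t^{2}\psi(t)} \asymp \sum_k\frac{1}{k^{2}\psi(k)}$, recovering the series in the statement. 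The remainder of your outline — convergence half via ordinary Borel--Cantelli and $g_s$-invariance after discretization, divergence half via quasi-independence from effective mixing combined with a Philipp-style second-moment argument (Theorem \ref{Ph1}), and a Fubini/expanding-horosphere passage from $X_{\mathrm{aff}}$ to the family $\{\Lambda_{Y,\vb}\}$ — is the right architecture and matches the paper's sketch, but the key technical step (quasi-independence for the non-smooth shrinking targets $\Delta^{-1}((-\infty,r])$, uniformly as $r\to-\infty$) is precisely what is deferred to \cite{KWa} and cannot be checked here.
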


\end{document}